\newtheorem{thm}{Theorem}[section]
\newtheorem{cor}[thm]{Corollary}
\newtheorem{lem}[thm]{Lemma}
\newtheorem{prop}[thm]{Proposition}
\newtheorem{defn}[thm]{Definition}
\newtheorem{ex}[thm]{Example}
\newtheorem{prob}[thm]{Problem}
\DeclareMathOperator{\TR}{tr}
\newcommand{\HSIP}[1]{\left\langle#1\right\rangle_{\scriptscriptstyle\! HS}}
\newcommand{\NORM}[1]{\left\|#1\right\|}
\newcommand{\HSNORM}[1]{\left\|#1\right\|_{\scriptscriptstyle HS}}
\newcommand\HS[1]{{\mathbb R}^{#1}}
\newcommand\SPH[2]{{\mathcal S}\left(\mathbb{R}^{\scriptscriptstyle #2}\right)} 
\newcommand\Q[3]{\mathcal{\bf Q}^{(#1)}_{{#3} } } 
\newcommand\K[3]{{\bf K}^{(#1)}_{{#3} } } 
\newcommand\ETE[4]{E_{#1,#2}\otimes E_{#3,#4}} 
\newcommand\U[2]{\mathcal O_{\scriptscriptstyle \mathbb {R}^{#2}}}
\newcommand\SCITI{{\frac{1}{m^2}} {\bf I}_m \otimes {\bf I}_m}
\newcommand\CAP[2]{\mathcal C_{#1} \left(#2 \right) }
\newcommand\A[2]{{\mathbf a}_{{#2}}} 
\newcommand\B[2]{{\mathbf b}_{{#2}}}  
\newcommand\D[3]{{\mathbf d}^{(#1)}_{{#3}}} 
\newcommand\VAR[1]{{\mathbf V}^{(#1)}}
\newcommand{\UNFs}[1]{{\Omega}_{n,m} {(\mathbb {R})}}  %used to denote equal norm frames
\newcommand{\Gr}[1]{\mu_{n,m} (\mathbb R)} %used to denote Grassmannian constant
\newcommand{\SA}[2]{\mathcal B_{\scriptscriptstyle\mathrm{SA}}( \mathbb{R}^{#2})}
\begin{document}

%%%%%%%%%%%%%%%
\title{Low frame coherence via zero-mean tensor embeddings}
\author{Bernhard G. Bodmann and John I. Haas IV\thanks{~This work was partially supported by NSF grant DMS-1412524 and the AMS-Simons Travel grant.}}
%%%%%%%%%%%%%%%

\maketitle

\begin{abstract}
This paper is concerned with achieving optimal coherence for highly redundant real 
%or complex 
unit-norm frames. As the redundancy grows, the number of vectors in the frame becomes too large to admit equiangular arrangements. In this case, other geometric optimality criteria need to be identified. To this end, we use an iteration of the embedding technique by Conway, Hardin and Sloane. As a consequence of their work, a quadratic mapping  embeds equiangular lines into a simplex in a real Euclidean space.
Here, higher degree polynomial maps embed highly redundant unit-norm frames to simplices in high-dimensional Euclidean spaces. We focus on the lowest degree
case in which the embedding is quartic.
\end{abstract}

\keywords{Grassmannian packings, tensor embeddings}

%%%%%%%%%%%%%%%%%%%%%%%%%%
%%%%%%%%%%%%%%%%%%%%%%%%%%
%%%%%%%%%%%%%%%%%%%%%%%%%%
%%%%%%%%%%%%%%%%%%%%%%%%%%
%%%%%%%%%%%%%%%%%%%%%%%%%%
%%%%%%%%%%%%%%%%%%%%%%%%%%

\section{Introduction}
The construction of equiangular lines has a long history in the mathematical literature \cite{Haantjes1948, Rankin1955, vanLintSeidel1966, LemmensSeidel1973, Zauner1999, MR1984549, MR2021601, XiaZhouGiannakis2005, MR2890902, MR2921716, MR3150919, Fickus:2015aa}. If the number of unit-norm vectors
spanning these lines cannot be enlarged any more without changing the set of angles/distances between them, then these vectors constitute an example
of an optimal packing. Such packings have applications ranging from coding, fiber-optic or wireless communications to phase retrieval and quantum information theory
\cite{ bod_cas_edi_bal_2008, MR2142983, MR836025}. An analytic formulation of equiangular lines as solutions of an optimization problem is the so-called Welch bound \cite{Welch1974}.
It can be obtained by combining a mapping of Conway, Hardin and Sloane \cite{ConwayHardinSloane1996} with a spherical cap packing bound by Rankin~ \cite{Rankin1955}.

In an earlier work, the case of redundancy beyond the equiangular regime was addressed by combining the embedding by Conway, Hardin and Sloane with the  orthoplex bound, which
is saturated by the example of maximal sets of mutually unbiased bases. With the help of relative difference sets, previously unknown examples of Grassmannian packings could be constructed \cite{MR3557826}; for examples, see the tables of Refs.~\citenum{Fickus:2015aa, 2016arXiv161003142C} for instances of Grassmannian frames with redundancies varying between that of maximal equiangular frames and maximal mutually unbiased bases.

Here, we iterate the embedding to obtain higher degree polynomial maps that are used to embed specific unit-norm frames to simplices. As a result, we identify 
several cases of optimal packings.

%%%%%%%%%%%%%%%%%%%%%%%%%%
%%%%%%%%%%%%%%%%%%%%%%%%%%
%%%%%%%%%%%%%%%%%%%%%%%%%%
\section{Preliminaries}
\subsection{Frame Theory}
Let $\{e_j\}_{j=1}^m$ denote the canonical orthonormal basis for the Hilbert space 
$\mathbb R^m$.  A sequence of vectors $\mathcal F = \{f_j\}_{j=1}^n \subset \mathbb R^m$ is a {\bf (finite) frame} for $\mathbb R^m$ if it spans the entire Hilbert space. From now on, we reserve the symbols $m$ and $n$ to refer to the dimension of the span of a frame and the cardinality of a frame, respectively.  The {\bf redundancy} of a given frame is the ratio $\frac n m$.

A frame $\mathcal F = \{f_j\}_{j=1}^n$  is {\bf $a$-tight} if 
$$\sum_{j=1}^n f_j  f_j^*= a\, \mathbf{I}_m, \text{ for some } a>0$$ 
where $\mathbf{I}_m$ denotes the $m \times m$ identity matrix.
The frame is {\bf unit-norm} if each frame vector has norm $\|f_j\|=1$.

Given a unit-norm frame $\mathcal F = \{ f_j \}_{j=1}^n$, its {\bf frame cosines} are the elements of the set 
$$
\Theta_\mathcal F : =\{ |\langle f_j, f_l \rangle | : j \neq l \},
$$
 and we say that $\mathcal F$ is {\bf $k$-angular} if $|\Theta_\mathcal F| =k$ for some $k \in \mathbb N$.   If
 $\Theta_{\mathcal F}$ has only one element and $\mathcal F$ is tight, then we speak of an {\bf equiangular tight frame}.

Let $\UNFs{R}$ denote the space of unit-norm frames for $\mathbb R^m$  consisting of $n$ vectors.
Given any set of unit vectors, $\mathcal F = \{ f_j \}_{j=1}^n \subset \mathbb F^m$,  its {\bf coherence} is defined by
$$\mu(\mathcal F) = \max\limits_{j \neq l} |\langle f_j, f_l \rangle |.$$
We define and denote the {\bf Grassmannian constant} as
$$
\Gr{F} = \min\limits_{\scriptscriptstyle \mathcal F \in \UNFs{F}} \mu(\mathcal F).
$$ 
Correspondingly, a frame $\mathcal F \in \UNFs{F}$ is a {\bf Grassmannian frame} if
$$
\mu(\mathcal F) = \Gr{F}. 
$$

%%%%%%%%%%%%%%%%%%%%%%%%%%
%%%%%%%%%%%%%%%%%%%%%%%%%%
%%%%%%%%%%%%%%%%%%%%%%%%%%
\section{Zero-mean tensor embeddings}
Our path toward identifying certain optimal line packings involves a two step process.  First, we apply a norm-preserving map to the frame vectors, thereby embedding the frame into a higher dimensional real sphere.  For the second step, we interpret the embedded vectors as the centers of spherical caps (which we discuss below) and exploit the cap packing results of Rankin~\cite{Rankin1955}.  If a frame embeds into  an optimal cap packing and certain additional conditions are satisfied, then the minimal coherence of the lifted frame is verified by the isometric nature of the the embedding.

We begin by defining the aforementioned family of norm-preserving maps.  We denote the unit sphere in $\mathbb R^m$ by $\SPH{F}{m}$ and we
let $\SA{F}{m}$ denote the real vector space of self-adjoint linear maps on $\HS{m}$.
From here on, $\omega$ is a random vector with values in $\SPH{F}{m}$ and 
$\mathbb E$ denotes the expectation with respect to the underlying uniform probability measure
on $\SPH{F}{m}$.

\begin{defn}\label{def_Qt}
The {\bf first zero-mean tensor embedding} is defined and denoted by
$$
\Q{1}{F}{m}: \SPH{F}{m} \rightarrow \SA{F}{m}: x \mapsto x \otimes x^* - {\scriptstyle \frac{1}{m}} {\bf I}_m \, ,
$$
and for $t \in \mathbb N$, the {\bf $(t+1)$-th zero-mean tensor embedding}, $\Q{t+1}{F}{m}$, is defined  recursively by
$$
   \Q{t+1}{F}{m}: \SPH{F}{m} \rightarrow \SA{F}{m}^{\otimes 2^{t-1}} : x \mapsto \left( \Q{t}{F}{m}(x)\right)^{\otimes 2} - \mathbb E \left[
    \left(\Q{t}{F}{m}(\omega) \right)^{\otimes 2} \right] \, .
$$
\end{defn}
For brevity, we also refer the $t$-th zero-mean tensor embedding simply as {\bf the $t$-th embedding}.  
The purpose of subtracting the expected value is that, just as $\mathbb E[\omega]=0$,  the mean of the embedding vanishes, 
$$
   \mathbb E\left[ \Q{t}{F}{m}(\omega) \right] = 0 .
$$  
In comparison with the action of of taking simple tensor powers of $x\otimes x^*$, the dimension of the range of the embedding is reduced by subtracting the expectation,
as we show in the next theorem.  To simplify notation, for each $t \geq 2$, we write
$$
\VAR{t}:= \mathbb E\left[\left( \Q{t-1}{F}{m}(\omega)\right)^{\otimes 2}\right].
$$

\begin{thm}\label{th_orth_cond}
 If $t \ge 2$ and $x \in \mathbb F^m$, then
$
  \TR[ \Q{t}{F}{m}(x) \VAR{t} ] = 0 \, .
$
\end{thm}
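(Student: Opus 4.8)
The plan is to expand the trace using the recursion that defines $\Q{t}{F}{m}$ and then collapse everything to a single scalar kernel that I can show is rotation invariant. Recall that $\Q{t}{F}{m}(x) = \left(\Q{t-1}{F}{m}(x)\right)^{\otimes 2} - \VAR{t}$ with $\VAR{t} = \mathbb E\bigl[(\Q{t-1}{F}{m}(\omega))^{\otimes 2}\bigr]$ (and note $\Q{t}{F}{m}$ is defined only on $\SPH{F}{m}$, so the unit-norm constraint $\|x\|=1$ is in force). First I would write
$$
\TR\!\left[\Q{t}{F}{m}(x)\,\VAR{t}\right] = \TR\!\left[\left(\Q{t-1}{F}{m}(x)\right)^{\otimes 2}\VAR{t}\right] - \TR\!\left[\bigl(\VAR{t}\bigr)^{2}\right].
$$
Both terms simplify via linearity of trace and expectation together with the tensor identity $\TR[(A^{\otimes 2})(B^{\otimes 2})] = (\TR[AB])^2$. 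Setting $g(x,y) := \TR[\Q{t-1}{F}{m}(x)\,\Q{t-1}{F}{m}(y)]$ and $h(x) := \mathbb E_\omega[g(x,\omega)^2]$, the first term equals $h(x)$, while inserting the definition of $\VAR{t}$ into one factor of $\bigl(\VAR{t}\bigr)^2$ turns the second term into $\mathbb E_\omega[h(\omega)]$. Hence the whole statement reduces to showing that $h$ is \emph{constant} on the sphere, for then $h(x) = \mathbb E_\omega[h(\omega)]$ and the two terms cancel.

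The main tool I would establish is orthogonal equivariance of the embeddings: for every $U$ in the orthogonal group $\U{F}{m}$ and every $s \ge 1$,
$$
\Q{s}{F}{m}(Ux) = U^{\otimes 2^{s-1}}\,\Q{s}{F}{m}(x)\,(U^*)^{\otimes 2^{s-1}}.
$$
I would prove this by induction on $s$. The base case is immediate, since $\Q{1}{F}{m}(Ux) = (Ux)(Ux)^* - \tfrac1m\mathbf{I}_m = U\,\Q{1}{F}{m}(x)\,U^*$. For the inductive step, the tensor square transforms by the corresponding tensor power of $U$, and the only thing left to verify is that the subtracted term is invariant under conjugation by that tensor power. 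This holds because the subtracted term is an average over the uniform measure on $\SPH{F}{m}$, which is $U$-invariant, so the change of variables $\omega \mapsto U^*\omega$ leaves it unchanged.

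With equivariance in hand I would finish by proving $h$ constant. Using equivariance of $\Q{t-1}{F}{m}$ and cyclicity of the trace, one obtains $g(Ux,\omega) = g(x,U^*\omega)$, so $h(Ux) = \mathbb E_\omega[g(x,U^*\omega)^2]$; the invariance of the uniform measure under $\omega \mapsto U^*\omega$ then gives $h(Ux) = \mathbb E_\omega[g(x,\omega)^2] = h(x)$. Since $\U{F}{m}$ acts transitively on $\SPH{F}{m}$, the function $h$ is constant, which completes the argument. (Equivalently, the diagonal invariance $g(Ux,U\omega)=g(x,\omega)$ shows that $g$ depends on its arguments only through $\langle x,\omega\rangle$, and the distribution of $\langle x,\omega\rangle$ under uniform $\omega$ does not depend on the choice of unit vector $x$.)

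I expect the only genuinely delicate point to be the verification that the subtracted expectation is invariant under conjugation by the appropriate tensor power of $U$; everything else is bookkeeping with the tensor trace identity and the transitivity of the orthogonal action. Some care is also required to track which tensor power of $U$ acts at each recursion stage, but this is determined mechanically by the definition of $\Q{t}{F}{m}$.
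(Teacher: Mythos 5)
Your proof is correct and follows essentially the same route as the paper's: both arguments rest on the orthogonal equivariance of the embeddings together with the rotation invariance of the uniform measure on the sphere, which force $\TR\bigl[\bigl(\Q{t-1}{F}{m}(x)\bigr)^{\otimes 2}\VAR{t}\bigr]$ to be constant in the unit vector $x$ and hence equal to its average $\TR\bigl[(\VAR{t})^{2}\bigr]$, so that the two terms cancel. The only cosmetic differences are that you collapse to the scalar kernel $g$ via $\TR[(A^{\otimes 2})(B^{\otimes 2})]=(\TR[AB])^{2}$ and invoke transitivity of the orthogonal action where the paper averages over Haar measure, and that you spell out the inductive proof of equivariance, which the paper uses without explicit justification.
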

\begin{proof}
We note that for any unitary $U$ on $\HS{m}$,
$$
  U^{\otimes 2^{t-1}} \VAR{t} (U^*)^{\otimes 2^{t-1}} 
  = \mathbb E\left[ \Q{t-1}{F}{m}(U \omega) \otimes \Q{t-1}{F}{m}(U\omega) \right]
  = \VAR{t}
$$
because $U\omega$ and $\omega$ are identically distributed.
This implies that 
\begin{align*}
 \TR\left[ \Q{t-1}{F}{m}(x)\otimes \Q{t-1}{F}{m}(x) \VAR{t} \right] 
 &= \TR\left[   \Q{t-1}{F}{m}(x)\otimes \Q{t-1}{F}{m}(x)
              U^{\otimes 2^{t-1}} \VAR{t}  (U^*)^{\otimes 2^{t-1}} \right] \\
  &= \TR\left[  \Q{t-1}{F}{m}(U^*x)\otimes \Q{t-1}{F}{m}(U^*x) \VAR{t} \right]
\end{align*}
and by averaging with respect to the choice of $U^*$ among all unitaries,
$$
   \TR\left[  \Q{t-1}{F}{m}(x)\otimes   \Q{t-1}{F}{m}(x) \VAR{t} \right] = \TR\left[ \left(\VAR{t}\right)^2 \right] \, .
$$ 
Consequently,
$$
  \TR\left[ \Q{t}{F}{m}(x) \VAR{t}\right] = \TR\left[ \left( \Q{t-1}{F}{m}(x)\otimes   \Q{t-1}{F}{m}(x)-\VAR{t}\right) \VAR{t} \right] 
= 0 \, .
$$
\end{proof}

The space of symmetric tensors in $({\mathbb R^d})^{\otimes 2}$
is of dimension $d(d+1)/2$, and with an additional orthogonality condition
the range is reduced to a subspace of dimension $d(d+1)/2-1=(d+2)(d-1)/2$.
Iterating this dimensionality bound yields a maximal dimension of the subspace containing the 
range of $\Q{t}{F}{m}$.
Accordingly, we define
and denote the {\bf first embedding dimension} by
$$
\D{1}{F}{m} 
: = \frac{(m+2)(m-1)}{2}
%\left\{  
%	\begin{array}{cc}
%		\scriptstyle
%		\frac{(m+2)(m-1)}{2} & \mathbb F = \mathbb R \\
%		\scriptstyle
%		m^2-1 & \mathbb F = \mathbb C \\
%	\end{array} 
%\right.
 ,
$$
and, for $t\in \mathbb N$, we define the  {\bf $(t+1)$-th embedding dimension} recursively by
$$
  \D{t+1}{F}{m}
 : = \frac 1 2 (\D{t}{F}{m}+2)(\D{t}{F}{m}-1) \, .
$$ 
For example, the {\bf second embedding dimension} is
$$
    \D{2}{F}{m} =\frac{ \left(m^2+m+2\right)\left(m^2+m-4\right)}{8} 
%\left\{  
%	\begin{array}{cc}
%		\scriptstyle
%		(m^2+m+2)(m^2+m-4)/8 & \mathbb F = \mathbb R \\
%		\scriptstyle
%		(m^2+1)(m^2-2)/2 & \mathbb F = \mathbb C \\
%	\end{array} 
%\right.
 ,
$$
and the {\bf third embedding dimension} is
$$
    \D{3}{F}{m} = \frac{\left((m-1)m(m+1)(m+2)+8\right) \left((m-1)m(m+1)(m+2)-16\right)}{128}
%\left\{  
%	\begin{array}{cc}
%		\scriptstyle
%		((m-1)m(m+1)(m+2)+8)((m-1)m(m+1)(m+2)-16)/128 & \mathbb F = \mathbb R \\
%		\scriptstyle
%		(m^4-m^2+2)(m^4-m^2-4)/8 & \mathbb F = \mathbb C \\
%	\end{array} 
%\right. 
. 
$$

In particular, Theorem~\ref{th_orth_cond} yields the following corollary.

\begin{cor}
The range of the map $\Q{t}{F}{m}$ is contained in a subspace of 
$\SA{F}{m}^{\otimes 2^{t-1}}$,
whose dimension is at most equal to
$
 %\D{t-1}{F}{m}(\D{t-1}{F}{m}+1)/2 -1 =  (\D{t-1}{F}{m}+2)(\D{t-1}{F}{m}-1)/2 
% =  
\D{t}{F}{m}\, .
$
\end{cor}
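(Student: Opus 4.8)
The plan is to prove the bound by induction on $t$, at each stage placing the range inside the symmetric square of the previous ambient subspace and then discarding one dimension via the orthogonality relation of Theorem~\ref{th_orth_cond}. For the base case $t=1$, note that $\Q{1}{F}{m}(x)=x\otimes x^*-\tfrac{1}{m}{\bf I}_m$ is self-adjoint, so its range lies in $\SA{F}{m}$, a space of dimension $m(m+1)/2$. Moreover it is traceless, that is, $\TR[\Q{1}{F}{m}(x){\bf I}_m]=\HSIP{\Q{1}{F}{m}(x),{\bf I}_m}=0$, so the range is confined to the hyperplane orthogonal to ${\bf I}_m\neq 0$, which has dimension $m(m+1)/2-1=(m+2)(m-1)/2=\D{1}{F}{m}$.

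For the inductive step, suppose the range of $\Q{t}{F}{m}$ lies in a subspace $W\subseteq\SA{F}{m}^{\otimes 2^{t-1}}$, which, enlarging if necessary, we take of dimension $d:=\D{t}{F}{m}$. Since $\Q{t}{F}{m}(x)\in W$ for every $x$, the tensor square $(\Q{t}{F}{m}(x))^{\otimes 2}$ is invariant under the coordinate swap and hence lies in the symmetric square $\mathrm{Sym}^2(W)$, of dimension $d(d+1)/2$; the same holds for $\VAR{t+1}=\mathbb E[(\Q{t}{F}{m}(\omega))^{\otimes2}]$, being an average of such tensors inside the closed subspace $\mathrm{Sym}^2(W)$. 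Consequently $\Q{t+1}{F}{m}(x)=(\Q{t}{F}{m}(x))^{\otimes2}-\VAR{t+1}\in\mathrm{Sym}^2(W)$ as well. Applying Theorem~\ref{th_orth_cond} at index $t+1\ge 2$ gives $\HSIP{\Q{t+1}{F}{m}(x),\VAR{t+1}}=\TR[\Q{t+1}{F}{m}(x)\VAR{t+1}]=0$, so the range is contained in the orthogonal complement of $\VAR{t+1}$ within $\mathrm{Sym}^2(W)$. Provided $\VAR{t+1}\neq 0$, this is a single linear constraint and reduces the dimension by exactly one, yielding $d(d+1)/2-1=\tfrac12(d+2)(d-1)=\D{t+1}{F}{m}$ and closing the induction.

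The crux, and the only step beyond routine bookkeeping, is the nondegeneracy $\VAR{t+1}\neq 0$, since it is precisely this that forces the dimension to drop by one rather than merely be bounded by $d(d+1)/2$. I would verify it using the identity $\HSIP{A^{\otimes2},B^{\otimes2}}=\HSIP{A,B}^2$, which gives
$$
\HSNORM{\VAR{t+1}}^2=\mathbb E_{\omega}\,\mathbb E_{\omega'}\left[\HSIP{\Q{t}{F}{m}(\omega),\Q{t}{F}{m}(\omega')}^2\right].
$$
The integrand is nonnegative and, being continuous, is strictly positive on the diagonal $\omega=\omega'$ whenever $\Q{t}{F}{m}$ is not identically zero, which holds for $m\ge 2$. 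Hence $\HSNORM{\VAR{t+1}}>0$, and the dimensional reduction at each stage is justified, establishing the claimed bound $\D{t}{F}{m}$.
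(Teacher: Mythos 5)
Your argument is correct and follows essentially the same route as the paper, which justifies the corollary by the same two observations: the tensor square of the previous stage lands in a symmetric square of dimension $d(d+1)/2$, and the orthogonality relation of Theorem~\ref{th_orth_cond} removes one further dimension, iterated from the traceless base case. Your additional verification that $\VAR{t+1}\neq 0$ (so that the orthogonality constraint genuinely cuts a dimension) is a point the paper passes over silently, and it is a worthwhile refinement rather than a departure.
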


\section{Rankin's bound and achieving optimal coherence}

In this section, we show how to exploit Rankin's classical bounds for spherical cap packings~\cite{Rankin1955} and deduce the optimality properties of certain frames.
%As the reader might anticipate at this point, there is a close connection between the so-called {\it spherical cap packing problem} and the search for Grassmannian frames.  
Given $d \in \mathbb N$, a unit vector $x\in\SPH{R}{d}$ and a real number $\theta \in (0, \pi]$, we define and denote the {\bf spherical cap of angular radius $\theta$ centered at $x$} as
$$
\CAP{x}{\theta} :=\left\{ y\in \SPH{F}{d} : \langle x, y\rangle > \cos \theta  \right\},
$$
which is alternatively referred to as a {\bf $\theta$-cap} when the center is arbitrary.
Rankin considered following optimization problem.
\begin{prob}\label{rankin}
Given a fixed dimension, $d$, and a fixed angle, $\theta$, what is the largest number, $n$, of $\theta$-caps, $\left\{ \CAP{x_j}{\theta} \right\}_{j=1}^n$, 
that one can configure on the surface of $\SPH{R}{d}$ such that $\CAP{x_j}{\theta} \cap \CAP{x_{j'}}{\theta} = \emptyset$  for each $j, j' \in \{1,2,...,n\}$ with $j \neq j'$.
\end{prob}
As a partial solution, Rankin reformulated Problem~\ref{rankin} in terms of its inverse optimization problem, providing sharp upper bounds on the caps' angular radii,  completely solving the problem  whenever $n\leq 2d$. We phrase his results in terms of the inner products between the caps' centers.

\begin{thm}\label{thm_rankin}[Rankin;~\cite{Rankin1955}]
Given any positive integer $d$
and any set of $n$ unit vectors $\{v_1, v_2, \dots, v_n\}$ 
in $\mathbb R^d$, then
$$
     \max_{\substack{ j, l \in \{1,2,...,n\} \\ j\neq l} }
                                   \langle v_j, v_l \rangle \ge - \frac{1}{n-1}, \, 
$$
and if $n>d+1$, then it improves to
$$
    \max_{\substack{ j, l \in \{1,2,...,n\} \\ j\neq l} }
                                   \langle v_j, v_l \rangle \ge 0. 
$$
\end{thm}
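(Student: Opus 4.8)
The plan is to prove the two inequalities by separate mechanisms: the first follows from positive semidefiniteness of a Gram-type quantity, and the second from a dimension count carried out by orthogonal projection and induction on $d$. Since both statements are purely about the sign structure of the inner products, no appeal to the earlier embedding machinery is needed here.

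For the first bound I would begin from the trivial inequality $\bigl\| \sum_{j=1}^n v_j \bigr\|^2 \ge 0$. Expanding the square and using $\|v_j\|=1$ gives
\[
  0 \le \Bigl\| \sum_{j=1}^n v_j \Bigr\|^2 = n + \sum_{j \ne l} \langle v_j, v_l \rangle .
\]
There are exactly $n(n-1)$ ordered off-diagonal pairs, and each is bounded above by $M := \max_{j \ne l} \langle v_j, v_l \rangle$, so $0 \le n + n(n-1)M$, which rearranges to $M \ge -\tfrac{1}{n-1}$. This argument uses nothing about $d$, so it is valid for every dimension and every $n$.

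For the improved bound under the hypothesis $n > d+1$, I would argue by contraposition, showing that if \emph{all} pairwise inner products are strictly negative then necessarily $n \le d+1$. The key step is an induction on $d$: assuming $\langle v_j, v_l \rangle < 0$ for all $j \ne l$, project $v_1, \dots, v_{n-1}$ onto the hyperplane orthogonal to $v_n$, writing $v_j = a_j v_n + w_j$ with $a_j = \langle v_j, v_n \rangle < 0$ (using $\|v_n\| = 1$). For $j \ne k$ in $\{1, \dots, n-1\}$ the identity $\langle v_j, v_k \rangle = a_j a_k + \langle w_j, w_k \rangle$, combined with $a_j a_k > 0$, forces $\langle w_j, w_k \rangle < 0$. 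A short separate check shows no $w_j$ can vanish, since $w_j = 0$ would make $v_j$ a negative multiple of $v_n$ and hence $\langle v_j, v_k \rangle > 0$ for any further index $k$. Thus $w_1, \dots, w_{n-1}$ are $n-1$ vectors in the $(d-1)$-dimensional space $v_n^\perp$ with pairwise negative inner products, and the inductive hypothesis yields $n-1 \le d$, i.e.\ $n \le d+1$, contradicting $n > d+1$.

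The hard part will be making the inductive step fully rigorous rather than the first bound, which is routine. Specifically, the sign-chasing relies essentially on \emph{strict} negativity and on the projection coefficients $a_j$ being strictly negative rather than merely nonpositive; I would need to treat the small-$n$ base cases and the potential vanishing of a projected vector carefully so that the reduction to $v_n^\perp$ always produces a legitimate smaller instance. Once those degenerate situations are dispatched, the contrapositive immediately gives $\max_{j \ne l} \langle v_j, v_l \rangle \ge 0$.
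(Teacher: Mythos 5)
The paper never proves this theorem---it is quoted verbatim as Rankin's result with a citation to the 1955 paper---so there is no internal argument to compare against; I can only assess your proof on its own merits, and it is correct. The first bound via $0 \le \bigl\| \sum_{j=1}^n v_j \bigr\|^2 = n + \sum_{j\ne l}\langle v_j, v_l\rangle \le n + n(n-1)M$ is the standard simplex-bound computation and is complete as written (it implicitly needs $n\ge 2$ for the maximum to be over a nonempty set, but that is the only case in which the statement is meaningful). The second bound via the contrapositive---at most $d+1$ unit vectors in $\mathbb{R}^d$ can have pairwise strictly negative inner products, proved by decomposing $v_j = a_j v_n + w_j$ with $w_j \perp v_n$ and inducting on $d$---is the classical argument, and your sign-chasing is right: $a_j a_k > 0$ together with $\langle v_j,v_k\rangle<0$ forces $\langle w_j,w_k\rangle<0$. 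The loose ends you flag are real but all routine: you must normalize the $w_j$ before invoking the inductive hypothesis (positive scaling preserves the signs of inner products, and no $w_j/\|w_j\|$ can coincide with another since that would give inner product $+1$); the exclusion of $w_j=0$ needs a third index and hence $n\ge 3$, but for $n\le 2$ the conclusion $n\le d+1$ is vacuous for $d\ge 1$; and the base case $d=1$ is immediate since two unit vectors of the same sign in $\mathbb{R}^1$ have inner product $+1$. Once those are written out, the proof is a legitimate, elementary, self-contained replacement for the citation.
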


\subsection{The first embedding}

The first embedding has been used in conjunction with Rankin's cap-packing results~\cite{Rankin1955} to characterize and construct numerous families of Grassmannian 
frames~\cite{ConwayHardinSloane1996, Fickus:2015aa, MR3557826, Appleby2009}.
In more detail, the vectors are mapped to a self-adjoint rank-one Hermitian and projected onto the orthogonal complement of the identity matrix.  
The inner product between the images of two unit vectors is a polynomial of the original inner product. As a consequence, under certain assumptions, optimal packings are
equivalent to packings on a Euclidean sphere, and the Rankin bound can be applied.

\begin{thm}\label{thm_t1}[Conway, Hardin and Sloane;~\cite{ConwayHardinSloane1996}]
If $\mathcal F$ is a unit-norm frame of $n$ vectors in $\mathbb{F}^m$ and $\{\Q{1}{F}{m}(f): f \in \mathcal F\}$ forms a simplex in a  subspace
of the real space of self-adjoint $m \times m$ matrices over $\mathbb F$, then $\mathcal F$ is a Grassmannian frame. 
Moreover, if $n > \D{1}{F}{m} + 1$, and the Hilbert-Schmidt inner product of any pair from $\{\Q{1}{F}{m}(f): f \in \mathcal F\}$ 
is non-positive, then $\mathcal F$ is a Grassmannian frame. 
\end{thm}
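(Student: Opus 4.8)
The plan is to reduce both assertions to Rankin's bound (Theorem~\ref{thm_rankin}) by showing that the first embedding turns the coherence of $\mathcal F$ into a strictly monotone function of the pairwise inner products of the embedded vectors, so that minimizing coherence over $\UNFs{F}$ becomes a Euclidean cap-packing problem in a fixed ambient space. First I would record the elementary identity that drives everything: a direct trace computation gives, for unit vectors $x,y$,
$$
 \HSIP{\Q{1}{F}{m}(x), \Q{1}{F}{m}(y)} = \ABS{\IP{x,y}}^2 - \tfrac{1}{m},
$$
so in particular $\HSNORM{\Q{1}{F}{m}(x)}^2 = \tfrac{m-1}{m}$ and $\TR\bigl[\Q{1}{F}{m}(x)\bigr]=0$. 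Thus every frame in $\UNFs{F}$ maps to $n$ equal-norm, trace-zero matrices lying on a common sphere inside a subspace of $\SA{F}{m}$ of dimension $\D{1}{F}{m}$. Normalizing $v_j := \Q{1}{F}{m}(f_j)/\sqrt{(m-1)/m}$ yields unit vectors with
$$
 \HSIP{v_j,v_l} = \frac{m\,\ABS{\IP{f_j,f_l}}^2 - 1}{m-1},
$$
a strictly increasing function of $\ABS{\IP{f_j,f_l}}$; hence $\mu(\mathcal F)$ is minimized precisely when $\max_{j\neq l}\HSIP{v_j,v_l}$ is.

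For the first assertion I would apply the first Rankin inequality to the $\{v_j\}$: for every frame it gives $\max_{j\neq l}\HSIP{v_j,v_l}\ge -\tfrac{1}{n-1}$, which via the identity above is a universal lower bound on $\mu$ over $\UNFs{F}$ (the Welch bound). The hypothesis that $\{\Q{1}{F}{m}(f)\}$ is a simplex means the $v_j$ are the vertices of a regular simplex centered at the origin, so every $\HSIP{v_j,v_l}=-\tfrac{1}{n-1}$ and Rankin's inequality is saturated. Hence $\mathcal F$ attains the universal lower bound, giving $\mu(\mathcal F)=\Gr{F}$, so $\mathcal F$ is Grassmannian.

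For the ``moreover'' part, the condition $n>\D{1}{F}{m}+1$ means the $n$ unit vectors $v_j$ sit in a space of dimension $d=\D{1}{F}{m}$ with $n>d+1$, so the improved Rankin bound gives $\max_{j\neq l}\HSIP{v_j,v_l}\ge 0$; through the same identity this is exactly the orthoplex bound $\mu\ge 1/\sqrt m$, valid for every such frame. Meanwhile non-positivity of the Hilbert--Schmidt inner products says $\ABS{\IP{f_j,f_l}}^2\le 1/m$, i.e.\ $\mu(\mathcal F)\le 1/\sqrt m$. The two inequalities force $\mu(\mathcal F)=1/\sqrt m=\Gr{F}$, so $\mathcal F$ is again Grassmannian.

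The computations here are routine; the step that needs care is the logical one, that Rankin's Euclidean packing bound, once transported through the embedding, holds uniformly over all of $\UNFs{F}$---this uniformity is what makes saturation of the bound equivalent to the Grassmannian property. The only genuine subtlety is the reading of ``simplex'': it must be the regular simplex \emph{centered at the origin}, equivalently the embedded vectors sum to zero, which is precisely the equality case of Rankin's first inequality; an off-center configuration would have a larger common inner product and would miss the Welch bound.
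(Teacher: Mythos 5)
Your proposal is correct and follows essentially the same route as the paper: normalize the first embedding to unit Hilbert--Schmidt norm, transport the coherence through the resulting monotone identity $\HSIP{v_j,v_l}=\frac{m|\langle f_j,f_l\rangle|^2-1}{m-1}$, and invoke the two cases of Rankin's bound to obtain the Welch and orthoplex bounds with their equality cases. Your added remark that the simplex must be centered at the origin (equivalently, the common inner product equals $-\frac{1}{n-1}$) matches the paper's implicit reading of the hypothesis and is a worthwhile clarification, but it does not change the argument.
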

\begin{proof}
It is straightforward to verify that
$$
   x \mapsto \tilde{T}^{(1)}(x) = \frac{m}{m-1} \Q{1}{F}{m}(x)
$$
maps the unit sphere in ${\mathbb F}^m$ to the unit sphere in $\SA{F}{m}$. 
More generally, the inner products of frame vectors $f_j$ and $f_l$ are related by
$$
 \left\langle  \tilde{T}^{(1)}(f_j),  \tilde{T}^{(1)}(f_l) \right\rangle_{HS} = \frac{m}{(m-1)} \left(|\langle f_j, f_l \rangle |^2  - \frac 1 m \right) \, .
  $$
  Now applying Rankin's bound shows that if the Hilbert-Schmidt inner product  assumes the constant value $\langle  \tilde{T}^{(1)}(f_j),  \tilde{T}^{(1)}(f_l) \rangle_{HS}=-\frac{1}{n-1}$
  when $j \ne l$, then the maximal magnitude occurring among inner products of pairs of vectors from $\mathcal F$
  is minimized.
  
  Moreover, if $n > \D{1}{F}{m} + 1$ and the maximum $\max_{j \ne l}  \left\langle  \tilde{T}^{(1)}(f_j),  \tilde{T}^{(1)}(f_l) \right\rangle_{HS} \le 0$,
  then by Rankin's bound equality holds and the frame is Grassmannian.
\end{proof}

Converting between the (squared) inner product of the frame vectors and the Hilbert-Schmidt inner product of the embedded vectors gives
the Welch and orthoplex bounds as consequence.

\begin{cor}
If $\mathcal F$ is a unit-norm frame of $n$ vectors in $\mathbb{F}^m$, then $\max_{j \ne l} |\langle f_j , f_l \rangle | \ge \sqrt{\frac{n-m}{(n-1)m}}$
and if equality holds, $\mathcal F$ is an equiangular tight frame. Moreover, if $n > \D{1}{F}{m}+1$, then
$\max_{j \ne l} |\langle f_j , f_l \rangle | \ge \frac{1}{\sqrt m}$ and if equality holds, $\mathcal F$ is a Grassmannian frame.
\end{cor}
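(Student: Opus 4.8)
The plan is to feed the embedded frame into Rankin's bounds (Theorem~\ref{thm_rankin}) by way of the inner-product identity already established in the proof of Theorem~\ref{thm_t1}. Recall that the rescaled embedding $\tilde{T}^{(1)}(x) = \frac{m}{m-1}\Q{1}{F}{m}(x)$ carries the unit sphere in $\mathbb{F}^m$ to the unit sphere in $\SA{F}{m}$, and that for $j \neq l$,
$$
\left\langle \tilde{T}^{(1)}(f_j),\, \tilde{T}^{(1)}(f_l)\right\rangle_{HS} = \frac{m}{m-1}\left(|\langle f_j, f_l\rangle|^2 - \frac{1}{m}\right) \, .
$$
Since the right-hand side is a strictly increasing function of $|\langle f_j, f_l\rangle|^2$, the pair attaining the maximal coherence is exactly the pair attaining the maximal Hilbert-Schmidt inner product, so it suffices to control the latter.

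First I would apply the universal Rankin bound to the $n$ unit vectors $\{\tilde{T}^{(1)}(f_j)\}_{j=1}^n$, which gives $\max_{j\neq l}\langle \tilde{T}^{(1)}(f_j), \tilde{T}^{(1)}(f_l)\rangle_{HS} \ge -\frac{1}{n-1}$. Substituting the identity and solving the resulting linear inequality for $\max_{j\neq l}|\langle f_j, f_l\rangle|^2$ produces $\max_{j\neq l}|\langle f_j, f_l\rangle|^2 \ge \frac{1}{m}-\frac{m-1}{m(n-1)}=\frac{n-m}{(n-1)m}$, hence the Welch bound after taking square roots; this is a short algebraic rearrangement. For the equality case I would invoke that equality in Rankin's first inequality can hold only for a regular simplex, i.e.\ only when \emph{all} pairwise inner products equal $-\frac{1}{n-1}$ and the centroid $\sum_j \tilde{T}^{(1)}(f_j)$ vanishes. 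Constancy of the inner products forces $|\langle f_j, f_l\rangle|$ to be constant (equiangularity), while the vanishing centroid reads $\sum_j \big(f_j f_j^* - \frac{1}{m}\mathbf{I}_m\big)=0$, i.e.\ $\sum_j f_j f_j^* = \frac{n}{m}\mathbf{I}_m$ (tightness); together these identify $\mathcal{F}$ as an equiangular tight frame.

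For the orthoplex bound I would first observe that every $\tilde{T}^{(1)}(f_j)$ lies in the subspace of self-adjoint matrices orthogonal to $\mathbf{I}_m$, which has dimension $\D{1}{F}{m}$. Hence, whenever $n > \D{1}{F}{m}+1$, Rankin's improved bound applies and yields $\max_{j\neq l}\langle \tilde{T}^{(1)}(f_j), \tilde{T}^{(1)}(f_l)\rangle_{HS} \ge 0$; substituting the identity gives $\max_{j\neq l}|\langle f_j, f_l\rangle| \ge \frac{1}{\sqrt{m}}$. If equality holds here, then $\max_{j\neq l}\langle \tilde{T}^{(1)}(f_j), \tilde{T}^{(1)}(f_l)\rangle_{HS} = 0$, so every pairwise Hilbert-Schmidt inner product is non-positive, and the second part of Theorem~\ref{thm_t1} immediately yields that $\mathcal{F}$ is Grassmannian.

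The computations are entirely routine, so I do not anticipate a genuine obstacle; the only point demanding care is the equality analysis for the Welch bound, where I must justify that equality in Rankin's first inequality is equivalent to the regular-simplex configuration (equal inner products together with vanishing sum), since that is precisely what upgrades the inequality to the rigidity conclusion that $\mathcal{F}$ is an equiangular tight frame.
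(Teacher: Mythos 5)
Your proposal is correct and follows essentially the same route as the paper, which derives this corollary by exactly the conversion you describe: applying Rankin's two bounds to the normalized first-embedding vectors, using the inner-product identity from the proof of Theorem~\ref{thm_t1}, and rearranging (the paper leaves the algebra and the equality analysis implicit, whereas you spell out the regular-simplex rigidity that yields equiangularity and tightness). No gaps.
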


\subsection{The second embedding}
  For the remainder of this work, 
we focus on the development of the analogous machinery corresponding to the second tensor embedding.  
In order to provide an explicit expression for second  embedding,  
we must compute the expectation,  $\mathbb E \left[
    \left(\Q{1}{F}{m}(\omega) \right)^{\otimes 2} \right]$, as given in Definition~\ref{def_Qt}.
To facilitate this, we define and denote the {\bf $t$-coherence tensor (for $\mathbb R^m$)} by 
$$
\K{t}{F}{m} 
:= 
\int\limits_{\U{F}{m}} \left( UPU^* \right)^{\otimes t} d\mu(U),
$$
where $\U{F}{m}$ denotes the matrix group of $m\times m$ orthogonal matrices, 
%(if $\mathbb F = \mathbb R$) or unitary matrices (if $\mathbb F = \mathbb C$) 
$\mu$ denotes the unique, left-invariant {\it Haar-measure} on $\U{F}{m}$, and $P$ is any $m \times m$ orthogonal projection onto a one-dimensional subspace of $\mathbb R^m$. 
In the following proposition, we provide an analytic expression for the $2$-coherence tensor.  In order to express its dependence on the underlying field and to simplify notation, we  define the constants
$$
\A{F}{m} := \frac{ \D{1}{F}{m} + (m-1)^2 }{ m^2 \D{1}{F}{m}}  \, \text{ and }
\B{F}{m} := \frac{\A{F}{m}}{3},
%\left\{
%	\begin{array}{cc}
%            \frac{\A{F}{m}}{3}, & \mathbb F = \mathbb R \\
%            \frac{\A{F}{m}}{2}, & \mathbb F = \mathbb C \\
%	\end{array}
%\right. 
%\text{ and  }
%\C{F}{m} :=
%\left\{
%	\begin{array}{cc}
%            \frac{\A{F}{m}}{3}, & \mathbb F = \mathbb R \\
%            0, & \mathbb F = \mathbb C \\
%	\end{array}
%\right., 
$$
and for each $j , j' \in \{1,2,...,n\}$, we denote the canonical matrix units for $\mathbb R^{m \times m}$ by $E_{j,j'} := e_j \otimes (e_{j'})^*.$

\begin{prop}\label{prop_K2comp}[see \cite[Ch. 7-9]{Ma2007} for details] The $2$-coherence tensor for $\mathbb R^m$ can be expressed in terms of the constants
$\A{F}{m}$ and $\B{F}{m}$ by
$$
\K{2}{F}{m} =
\A{F}{m} \sum\limits_{j=1}^{m} \ETE{j}{j}{j}{j}
+
 \B{F}{m} \sum\limits_{\tiny \substack{j,j' =1 \\ j \neq j'}}^m                                   
\Big(
         \ETE{j}{j'}{j'}{j} + \ETE{j}{j}{j'}{j'} 
        +
       \ETE{j}{j'}{j}{j'}
\Big).
$$
\end{prop}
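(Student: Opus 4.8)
The plan is to reduce the Haar integral to a moment computation for a single uniform random unit vector, exploit symmetry to pin down the form of the answer, and then fix the one remaining scalar by a normalization. Since all rank-one orthogonal projections are conjugate, I may assume without loss of generality that $P = e_1 e_1^*$. Then $UPU^* = (Ue_1)(Ue_1)^* = uu^*$, where $u := Ue_1$ is distributed uniformly on $\SPH{F}{m}$ by invariance of the Haar measure. Writing $M = uu^*$, the operator $\K{2}{F}{m} = \mathbb E[M \otimes M]$ has, in the matrix-unit basis, the coefficient of $\ETE{j}{j'}{k}{l}$ equal to $\mathbb E[M_{jj'} M_{kl}] = \mathbb E[u_j u_{j'} u_k u_l]$. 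Thus computing the $2$-coherence tensor is exactly the problem of computing the fourth moments of a uniform random unit vector.

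Next I would determine the structure of the tensor $T_{jj'kl} := \mathbb E[u_j u_{j'} u_k u_l]$ from two symmetries. As a product of scalars under an expectation, $T$ is invariant under every permutation of its four indices; and because the law of $u$ is rotation invariant, $T$ is an $O(m)$-invariant tensor (equivalently, $\K{2}{F}{m}$ commutes with $U^{\otimes 2}$ for all $U \in \U{F}{m}$, which also follows directly from left-invariance of $\mu$). By the first fundamental theorem of invariant theory for the orthogonal group, every $O(m)$-invariant four-tensor is a linear combination of the three Kronecker pairings $\delta_{jj'}\delta_{kl}$, $\delta_{jk}\delta_{j'l}$, $\delta_{jl}\delta_{j'k}$, and the fully symmetric invariants form the one-dimensional subspace spanned by their sum. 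Hence $T_{jj'kl} = c\,(\delta_{jj'}\delta_{kl} + \delta_{jk}\delta_{j'l} + \delta_{jl}\delta_{j'k})$ for a single constant $c$ depending only on $m$.

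It remains to fix $c$ and to rewrite the answer in the prescribed basis. Since $\TR(uu^*) = \|u\|^2 = 1$, taking the trace gives $\TR[\K{2}{F}{m}] = \mathbb E[(\TR M)^2] = 1$; on the other hand the three delta-pairings correspond in the matrix-unit basis to $\mathbf I_m \otimes \mathbf I_m$, to the flip $\sum_{a,b} E_{a,b} \otimes E_{b,a}$, and to the contraction $\sum_{a,b} E_{a,b} \otimes E_{a,b}$, whose traces are $m^2$, $m$, and $m$ respectively. This yields $c\,(m^2 + 2m) = 1$, so $c = 1/\big(m(m+2)\big)$. Grouping by indices, the diagonal term $\ETE{j}{j}{j}{j}$ (i.e. $j=j'=k=l$) receives a contribution from all three pairings and hence coefficient $3c$, while each genuinely off-diagonal type $\ETE{j}{j}{j'}{j'}$, $\ETE{j}{j'}{j'}{j}$, and $\ETE{j}{j'}{j}{j'}$ (with $j \neq j'$) receives coefficient $c$. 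The claimed formula then follows once I verify the arithmetic identities $\A{F}{m} = 3c = 3/\big(m(m+2)\big)$ and $\B{F}{m} = c = 1/\big(m(m+2)\big)$, which reduce to a short simplification using $\D{1}{F}{m} = (m+2)(m-1)/2$ together with $\B{F}{m} = \A{F}{m}/3$.

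The main obstacle is the invariant-theory step: justifying that the space of fully symmetric $O(m)$-invariant four-tensors is one-dimensional (valid for $m \geq 2$), since everything else is either a reduction, a normalization, or routine index bookkeeping. One could instead bypass invariant theory by computing the moments $\mathbb E[u_1^4] = 3/\big(m(m+2)\big)$ and $\mathbb E[u_1^2 u_2^2] = 1/\big(m(m+2)\big)$ directly --- for instance by realizing $u$ as a normalized Gaussian vector and applying Wick's formula, then checking consistency via $\sum_{a,b}\mathbb E[u_a^2 u_b^2] = \mathbb E[\|u\|^4] = 1$ --- which furnishes the constants without appealing to the structure theorem.
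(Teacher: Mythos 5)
Your proof is correct and takes essentially the same route as the paper's: both arguments pin down the form of $\K{2}{F}{m}$ via its invariance under the tensor representation of the orthogonal group (equivalently, the symmetric $O(m)$-invariant four-tensor structure) and then fix the single remaining scalar by the trace normalization $\TR(\K{2}{F}{m})=1$ --- you simply supply the invariant-theory and fourth-moment details that the paper delegates to the cited reference. Your identification $\A{F}{m}=3/(m(m+2))$ and $\B{F}{m}=1/(m(m+2))$ agrees with the paper's definitions of these constants.
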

\begin{proof} As defined, the $2$-coherence tensor is an average of rank one orthogonal projections, yielding the trace normalization $\TR(\K{2}{F}{m}) = 1$.  Its explicit form follows by the invariance properties of $\TR(\K{2}{F}{m})$
under the tensor representation of the orthogonal or unitary group (see \cite[Ch. 7-9]{Ma2007} for details).
\end{proof}

In light of the explicit expression for $\K{2}{F}{m}$ from Proposition~\ref{prop_K2comp}, the value of its squared Hilbert Schmidt norm follows by direct computation, which we record in the following corollary.
\begin{cor}\label{cor_K2norm}
The squared Hilbert-Schmidt norm of the $2$-coherence tensor is 
$$
  \HSNORM{\K{2}{F}{m}}^2 = \TR\left( (\K{2}{F}{m})^2 \right) = m \A{F}{m}^2 + m (m-1) \left(3 \B{F}{m}^2 \right) = \A{F}{m}.
$$
\end{cor}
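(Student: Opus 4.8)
The plan is to establish the three asserted equalities in turn. The first is essentially definitional: since $\K{2}{F}{m}$ is an average of orthogonal projections $(UPU^*)^{\otimes 2}$, it is self-adjoint, so $\TR((\K{2}{F}{m})^2) = \HSNORM{\K{2}{F}{m}}^2$, and it suffices to compute the squared Hilbert--Schmidt norm directly from the expansion in Proposition~\ref{prop_K2comp}. I would begin by recording that the matrix-unit tensors form an orthonormal family: since $\HSIP{E_{a,b}, E_{a',b'}} = \TR(E_{b,a} E_{a',b'}) = \delta_{a,a'}\delta_{b,b'}$, multiplicativity of the Hilbert--Schmidt inner product over tensor factors gives $\HSIP{\ETE{a}{b}{c}{d}, \ETE{a'}{b'}{c'}{d'}} = \delta_{a,a'}\delta_{b,b'}\delta_{c,c'}\delta_{d,d'}$. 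Thus the squared norm is obtained simply by summing the squares of the coefficients, \emph{provided} the basis tensors appearing in the expansion are pairwise distinct.

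The crux of the second equality is therefore a bookkeeping check: one must verify that the diagonal family $\ETE{j}{j}{j}{j}$ and the three off-diagonal families $\ETE{j}{j'}{j'}{j}$, $\ETE{j}{j}{j'}{j'}$, $\ETE{j}{j'}{j}{j'}$ (each indexed over ordered pairs $j \neq j'$) pick out mutually disjoint sets of basis elements. I would do this by reading off the defining index pattern of each family as a quadruple $(a,b,c,d)$: the diagonal family has $a=b=c=d$; the first off-diagonal family has $a=d \neq b=c$; the second has $a=b \neq c=d$; and the third has $a=c \neq b=d$. A short case comparison shows no quadruple can satisfy two of these patterns at once (for instance, the second family forces $a=b$ while the first and third force $a \neq b$), and within each family distinct ordered pairs yield distinct quadruples. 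Counting then gives $m$ diagonal terms and $m(m-1)$ terms in each of the three off-diagonal families, so $\HSNORM{\K{2}{F}{m}}^2 = m\,\A{F}{m}^2 + 3m(m-1)\,\B{F}{m}^2$.

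For the final equality I would substitute $\B{F}{m} = \A{F}{m}/3$ to collapse the expression to $\A{F}{m}^2\left(m + \tfrac{m(m-1)}{3}\right) = \A{F}{m}^2 \cdot \tfrac{m(m+2)}{3}$. It then remains to confirm that $\A{F}{m} \cdot \tfrac{m(m+2)}{3} = 1$, equivalently that $\A{F}{m} = \tfrac{3}{m(m+2)}$. This follows by simplifying the definition of $\A{F}{m}$ with $\D{1}{F}{m} = \tfrac{(m+2)(m-1)}{2}$: the numerator $\D{1}{F}{m} + (m-1)^2$ factors as $\tfrac{3m(m-1)}{2}$, the denominator $m^2 \D{1}{F}{m}$ equals $\tfrac{m^2(m+2)(m-1)}{2}$, and the ratio reduces to $\tfrac{3}{m(m+2)}$.

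The only genuinely delicate point is the disjointness verification in the second paragraph; everything else is routine algebra. If two of the off-diagonal families shared a basis tensor, cross terms would appear and the clean coefficient-sum formula would fail, so I would treat that index analysis as the heart of the argument and present the remaining simplifications only briefly.
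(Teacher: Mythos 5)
Your proof is correct and matches the paper's approach: the paper simply asserts that the norm ``follows by direct computation'' from the expansion in Proposition~\ref{prop_K2comp}, and your argument is precisely that computation carried out in detail (orthonormality of the matrix-unit tensors, disjointness of the four index patterns, and the algebraic reduction $\A{F}{m}=\tfrac{3}{m(m+2)}$, all of which check out). No gaps.
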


With these basic properties of the $2$-coherence tensor established, next we compute  
$\mathbb E \left[ \left(\Q{1}{F}{m}(\omega) \right)^{\otimes 2} \right]$ and, in particular, provide the desired concrete expression of the second tensor embedding.

\begin{prop}\label{prop_Q2}
We have have $\VAR{2} = \K{2}{F}{m} - \SCITI$; in particular, the second embedding is given
by
$$
\Q{2}{F}{m}: \SPH{F}{m} \rightarrow \SA{F}{m}^{\otimes 2}: 
x \mapsto \left(\Q{1}{F}{m}(x)\right) \otimes \left(\Q{1}{F}{m}(x)\right)^*  - \K{2}{F}{m} + \SCITI.
$$
\end{prop}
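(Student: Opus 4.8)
The plan is to compute $\VAR{2} = \mathbb E\left[\left(\Q{1}{F}{m}(\omega)\right)^{\otimes 2}\right]$ directly, by expanding the first embedding and then exploiting the orthogonal invariance of the uniform measure on $\SPH{F}{m}$. First I would write $\Q{1}{F}{m}(\omega) = \omega \otimes \omega^* - \frac 1m {\bf I}_m$ and expand the tensor square bilinearly into four terms,
$$
\left(\Q{1}{F}{m}(\omega)\right)^{\otimes 2} = (\omega\otimes\omega^*)^{\otimes 2} - \tfrac 1m (\omega\otimes\omega^*)\otimes {\bf I}_m - \tfrac 1m {\bf I}_m \otimes (\omega\otimes\omega^*) + \tfrac{1}{m^2}{\bf I}_m\otimes {\bf I}_m ,
$$
after which the whole computation reduces to evaluating the expectation of each summand.

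The two facts that drive the evaluation are the following. First, by orthogonal invariance, $\mathbb E[\omega\otimes\omega^*]$ commutes with every element of $\U{F}{m}$ and has unit trace, so it is forced to equal $\frac 1m {\bf I}_m$; this handles the two cross terms. Second, the pushforward of the uniform measure on $\SPH{F}{m}$ under $\omega \mapsto \omega\otimes\omega^*$ coincides with the pushforward of Haar measure on $\U{F}{m}$ under $U \mapsto UPU^*$, since both realize the unique orthogonally-invariant probability measure on rank-one projections. Consequently $\mathbb E\left[(\omega\otimes\omega^*)^{\otimes 2}\right] = \K{2}{F}{m}$ by the very definition of the $2$-coherence tensor.

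Substituting these, the three identity-valued contributions collapse: each cross term yields $-\frac{1}{m^2}{\bf I}_m\otimes {\bf I}_m$ and the final term yields $+\frac{1}{m^2}{\bf I}_m\otimes {\bf I}_m$, for a net $-\SCITI$. This gives $\VAR{2} = \K{2}{F}{m} - \SCITI$, the first assertion. The explicit formula for the second embedding then follows at once by inserting this into the recursion of Definition~\ref{def_Qt} at $t=1$, namely $\Q{2}{F}{m}(x) = \left(\Q{1}{F}{m}(x)\right)^{\otimes 2} - \VAR{2}$, and using that $\Q{1}{F}{m}(x)$ is self-adjoint, so its tensor square may be written $\left(\Q{1}{F}{m}(x)\right)\otimes\left(\Q{1}{F}{m}(x)\right)^*$.

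The computation itself is routine; the only step demanding care is the measure-theoretic identification underlying $\mathbb E\left[(\omega\otimes\omega^*)^{\otimes 2}\right] = \K{2}{F}{m}$, where I must confirm that averaging a fixed rank-one projection $P=\omega_0\otimes\omega_0^*$ over $\U{F}{m}$ reproduces the sphere average of $\omega\otimes\omega^*$. This holds because $U\mapsto U\omega_0$ transports Haar measure to the uniform measure on $\SPH{F}{m}$, and because the map $\omega\mapsto\omega\otimes\omega^*$ is insensitive to the sign ambiguity $\omega\mapsto-\omega$, which is itself a symmetry of the uniform measure; I would state this identification carefully rather than treat it as obvious, as it is the one place where the two equivalent descriptions of the averaging must be reconciled.
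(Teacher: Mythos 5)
Your proposal is correct and follows essentially the same route as the paper: expand $\left(\Q{1}{F}{m}(\omega)\right)^{\otimes 2}$ bilinearly into four terms, evaluate the cross terms via $\mathbb E[\omega\otimes\omega^*]=\frac{1}{m}{\bf I}_m$, identify $\mathbb E\left[(\omega\otimes\omega^*)^{\otimes 2}\right]$ with $\K{2}{F}{m}$, and combine the identity-valued contributions into $-\SCITI$. The only difference is that you spell out the measure-theoretic identification between the sphere average and the Haar average defining $\K{2}{F}{m}$, which the paper leaves implicit; that is a welcome clarification rather than a departure.
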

\begin{proof}
Upon the expansion of $\Q{1}{F}{m}(\omega)^{\otimes 2}$,
$$
   \Q{1}{F}{m}(\omega) \otimes \Q{1}{F}{m}(\omega) =
   (\omega \otimes \omega^*)^{\otimes 2} 
   - {\frac{1}{m}} {\bf I}_m\otimes \omega \otimes \omega^* -{\frac{1}{m}} \omega \otimes \omega^* \otimes {\bf I}_m + \SCITI ,\, 
$$
computing the expectation term by term gives
$$
 \VAR{t} = \mathbb E[ \Q{1}{F}{m}(\omega) \otimes \Q{1}{F}{m}(\omega) ]
 = \K{2}{F}{m} -{\frac{2}{m^2}} {\bf I}_m \otimes {\bf I}_m +\SCITI \,  , 
$$   
where we have used $\mathbb E[ \omega\otimes \omega^*] =
{\scriptstyle\frac{1}{m}} {\bf I}_m$.
Simplifying yields the claimed identity.
\end{proof}

The orthogonality condition implied by Theorem~\ref{th_orth_cond} and Corollary~\ref{cor:QKorth} for the second tensor embedding thus reads as follows.
\begin{cor}\label{cor:QKorth}
For $t=2$, %$M=\K{2}{F}{m} -\SCITI$
$$
  \HSIP{ \Q{2}{F}{m}(x), \VAR{2}} =\HSIP{ \Q{2}{F}{m}(x), \K{2}{F}{m} - \SCITI  }=0 .
$$
\end{cor}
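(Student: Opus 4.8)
The plan is to observe that this corollary is an immediate specialization of the general orthogonality result in Theorem~\ref{th_orth_cond} to the case $t=2$, combined with the explicit identification of $\VAR{2}$ furnished by Proposition~\ref{prop_Q2}. Essentially all of the content is already in place; what remains is to translate the trace identity of Theorem~\ref{th_orth_cond} into the language of the Hilbert--Schmidt pairing.

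First I would recall that for self-adjoint tensors $A, B \in \SA{F}{m}^{\otimes 2}$, the Hilbert--Schmidt inner product satisfies $\HSIP{A,B} = \TR(A^* B) = \TR(AB)$. Both $\Q{2}{F}{m}(x)$ and $\VAR{2}$ are self-adjoint by construction---the former as the difference of a symmetric tensor square and a self-adjoint expectation, the latter as an average of self-adjoint tensors---so this identification applies verbatim, and no adjoints need to be tracked.

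Next I would invoke Theorem~\ref{th_orth_cond} with $t=2$, which asserts $\TR[\Q{2}{F}{m}(x)\,\VAR{2}] = 0$. By the identification above, this is precisely $\HSIP{\Q{2}{F}{m}(x), \VAR{2}} = 0$, establishing the first equality. The second equality then follows by substituting the explicit form $\VAR{2} = \K{2}{F}{m} - \SCITI$ from Proposition~\ref{prop_Q2} directly into the pairing.

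I do not anticipate any genuine obstacle here: the corollary is a direct consequence of results already established, and the only point requiring attention is the verification that the Hilbert--Schmidt inner product coincides with the trace of the product of the two operators. That coincidence holds exactly because both $\Q{2}{F}{m}(x)$ and $\VAR{2}$ lie in $\SA{F}{m}^{\otimes 2}$, so the real symmetry of the arguments removes any conjugation and the corollary reduces to a one-line restatement of Theorem~\ref{th_orth_cond}.
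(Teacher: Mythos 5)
Your proposal is correct and matches the paper's (implicit) reasoning exactly: the corollary is presented there as an immediate consequence of Theorem~\ref{th_orth_cond} applied with $t=2$ together with the identification $\VAR{2} = \K{2}{F}{m} - \SCITI$ from Proposition~\ref{prop_Q2}, with the trace identity read as a Hilbert--Schmidt orthogonality statement. Your added care about self-adjointness justifying $\HSIP{A,B}=\TR(AB)$ is a fine (if routine) point the paper leaves unstated.
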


In the following lemma, we compute the Hilbert Schmidt inner product between an arbitrary $1$-embedded vector, $\Q{1}{F}{m}(x)$, and the $2$-coherence tensor, $\K{2}{F}{m}$ and show that $\Q{2}{F}{m}$ 
is indeed a norm-preserving embedding.  Afterward, we show how the inner products between the embedded vectors relate to cosine set of the original frame.

\begin{lem}\label{lem_K2rel}  
     Given a unit vector $x \in \mathbb F^m$, then
     \begin{enumerate}[(i)]
     \item\label{cl_1}  $\HSIP{ \left(\Q{1}{F}{m}(x)\right)^{\otimes 2}, \K{2}{F}{m}} = \A{F}{m} - \frac{1}{m^2}$
	%\item\label{cl_2}
	%	$\HSIP{\Q{2}{F}{m}(x), \K{t}{F}{m} - \SCITI} = 0$, and
	\item\label{cl_3}
		$\HSNORM{\Q{2}{F}{m}(x)}^2  = 1 - \frac{2}{m} + \frac{2}{m^2} - \A{F}{m}$, which shows that the second embedding is norm preserving,   up to a scale factor .
\end{enumerate}
\end{lem}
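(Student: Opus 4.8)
The plan is to reduce both identities to a handful of scalar inner products, almost all of which are independent of the chosen unit vector $x$ because of the rotation invariance built into $\K{2}{F}{m}$. For part~(\ref{cl_1}) I would expand the first embedding, $\Q{1}{F}{m}(x) = x \otimes x^* - \frac{1}{m}{\bf I}_m$, as
$$(\Q{1}{F}{m}(x))^{\otimes 2} = (x\otimes x^*)^{\otimes 2} - \tfrac{1}{m}(x\otimes x^*)\otimes {\bf I}_m - \tfrac{1}{m}{\bf I}_m \otimes (x\otimes x^*) + \SCITI,$$
and pair each of the four summands with $\K{2}{F}{m}$ through its integral definition. Since $\HSIP{A\otimes B, (UPU^*)^{\otimes 2}} = \HSIP{A, UPU^*}\,\HSIP{B, UPU^*}$ and $\TR(UPU^*)=1$, every pairing collapses to a moment of $|\langle x, \omega\rangle|$ against the uniform sphere measure: the leading term gives $\HSIP{(x\otimes x^*)^{\otimes 2}, \K{2}{F}{m}} = \mathbb E[|\langle x, \omega\rangle|^4]$, each of the two cross terms gives $-\frac{1}{m}\,\mathbb E[|\langle x, \omega\rangle|^2] = -\frac{1}{m^2}$, and the last gives $+\frac{1}{m^2}$. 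The only genuine input is the fourth-moment identity $\mathbb E[|\langle x, \omega\rangle|^4] = \frac{3}{m(m+2)}$, which I would match to $\A{F}{m}$ via the elementary reduction $\A{F}{m} = \frac{\D{1}{F}{m}+(m-1)^2}{m^2\D{1}{F}{m}} = \frac{3}{m(m+2)}$; summing the four contributions then gives $\A{F}{m} - \frac{1}{m^2}$. (Equivalently, one may read the same four pairings off the explicit expansion of Proposition~\ref{prop_K2comp}: using $\B{F}{m} = \A{F}{m}/3$ and $\TR[\Q{1}{F}{m}(x)] = 0$, the $x$-dependence through $\sum_j x_j^4$ cancels and the same constant emerges.)

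For part~(\ref{cl_3}) I would lean on the orthogonality relation of Corollary~\ref{cor:QKorth}, namely $\HSIP{\Q{2}{F}{m}(x), \VAR{2}} = 0$. Writing $\Q{2}{F}{m}(x) = (\Q{1}{F}{m}(x))^{\otimes 2} - \VAR{2}$ and expanding the squared norm against itself, the orthogonality annihilates the $\VAR{2}$ cross term and leaves
$$\HSNORM{\Q{2}{F}{m}(x)}^2 = \HSNORM{(\Q{1}{F}{m}(x))^{\otimes 2}}^2 - \HSIP{(\Q{1}{F}{m}(x))^{\otimes 2}, \VAR{2}}.$$
The first term factors as $\HSNORM{\Q{1}{F}{m}(x)}^4$, and the elementary computation $\HSNORM{\Q{1}{F}{m}(x)}^2 = \TR[(x\otimes x^* - \frac{1}{m}{\bf I}_m)^2] = \frac{m-1}{m}$ turns it into $\left(\frac{m-1}{m}\right)^2 = 1 - \frac{2}{m} + \frac{1}{m^2}$. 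For the second term I substitute $\VAR{2} = \K{2}{F}{m} - \SCITI$: the $\K{2}{F}{m}$ piece is exactly part~(\ref{cl_1}), namely $\A{F}{m} - \frac{1}{m^2}$, while the $\SCITI$ piece vanishes because $\HSIP{{\bf I}_m\otimes {\bf I}_m, (\Q{1}{F}{m}(x))^{\otimes 2}} = (\TR\,\Q{1}{F}{m}(x))^2 = 0$. Subtracting yields $1 - \frac{2}{m} + \frac{2}{m^2} - \A{F}{m}$, a constant independent of the chosen unit vector; hence $\Q{2}{F}{m}$, after rescaling by this constant, carries the unit sphere into a sphere, which is the asserted norm preservation up to a scale factor.

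No step is conceptually hard; the two places demanding care are the fourth-moment evaluation $\mathbb E[|\langle x, \omega\rangle|^4] = \A{F}{m}$ (equivalently, the algebraic reduction of $\A{F}{m}$ to $\frac{3}{m(m+2)}$) and the sign bookkeeping among the cross and identity terms. Both rest on the two structural facts $\TR[\Q{1}{F}{m}(x)] = 0$ and the tensor factorization $\HSIP{A\otimes B, C\otimes D} = \HSIP{A,C}\HSIP{B,D}$ of the Hilbert--Schmidt inner product, together with the already-established orthogonality of Corollary~\ref{cor:QKorth}.
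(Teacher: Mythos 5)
Your proof is correct, and part~(\ref{cl_1}) takes a genuinely different route from the paper's. The paper never computes $\HSIP{(\Q{1}{F}{m}(x))^{\otimes 2},\K{2}{F}{m}}$ directly: it observes that $\TR[\Q{2}{F}{m}(x)]=0$ together with the invariance-based orthogonality $\HSIP{\Q{2}{F}{m}(x),\VAR{2}}=0$ forces $\HSIP{\Q{2}{F}{m}(x),\K{2}{F}{m}}=0$, whence
$\HSIP{(\Q{1}{F}{m}(x))^{\otimes 2},\K{2}{F}{m}}=\HSIP{\K{2}{F}{m}-\SCITI,\K{2}{F}{m}}=\HSNORM{\K{2}{F}{m}}^2-\frac{1}{m^2}$, and then imports the two numerical facts $\TR[\K{2}{F}{m}]=1$ and $\HSNORM{\K{2}{F}{m}}^2=\A{F}{m}$ from Proposition~\ref{prop_K2comp} and Corollary~\ref{cor_K2norm}. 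You instead expand $(\Q{1}{F}{m}(x))^{\otimes 2}$ into four summands and pair each with the integral definition of $\K{2}{F}{m}$, reducing everything to the second and fourth moments of $\langle x,\omega\rangle$; the payload is the fourth-moment identity $\mathbb E[|\langle x,\omega\rangle|^4]=\frac{3}{m(m+2)}=\A{F}{m}$, which you correctly match against the definition of $\A{F}{m}$. The paper's route gets the $x$-independence for free from unitary invariance and needs only the trace and norm of $\K{2}{F}{m}$; yours is more self-contained and elementary (it bypasses Corollary~\ref{cor_K2norm} entirely) at the cost of quoting the standard spherical fourth-moment formula, which you assert rather than derive --- a minor gap worth a line of justification. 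Your part~(\ref{cl_3}) is essentially the paper's argument verbatim: orthogonality kills the $\VAR{2}$ cross term, $\HSNORM{\Q{1}{F}{m}(x)}^4=\left(\frac{m-1}{m}\right)^2$, and the $\SCITI$ contribution dies because $\TR[\Q{1}{F}{m}(x)]=0$.
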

\begin{proof}
We deduce from $\Q{2}{F}{m}(x)=\left(\Q{1}{F}{m}(x)\right)^{\otimes 2} - \VAR{2}$
with $\TR\left[\Q{1}{F}{m}(x)\right]=\TR\left[\Q{1}{F}{m}(\omega)\right]=0$ 
that $\TR\left[\Q{2}{F}{m}(x)\right]=0$.
Together with the orthogonality condition,
$\HSIP{\Q{2}{F}{m}(x),\VAR{2}} = 0$,
this gives
$$
  \HSIP{ \Q{2}{F}{m}(x), \K{2}{F}{m} } 
  = \HSIP{ \Q{2}{F}{m}(x), \SCITI }
  = {\scriptstyle \frac{1}{m^2}} \TR\left[\Q{2}{F}{m}(x) \right] = 0 \, .
$$
Expressing $\Q{2}{F}{m}(x)$ in terms of the tensor power  $(\Q{1}{F}{m}(x))^{\otimes 2}$ 
then implies
$$
  \HSIP{(\Q{1}{F}{m}(x))^{\otimes 2}, \K{2}{F}{m}}
   - \HSIP{ \K{2}{F}{m} - \SCITI, \K{2}{F}{m}}
   = \TR\left[ (\K{2}{F}{m})^2\right] - {\scriptstyle \frac{1}{m^2}} \TR[ \K{2}{F}{m} ] 
   = \TR\left[ (\K{2}{F}{m})^2\right] - \frac{1}{m^2} \, .
$$

Note that the explicit form 
for $\K{2}{F}{m}$ in Proposition~\ref{prop_K2comp} implies the trace normalization
\begin{equation}\label{eq_trK2is1}
     \TR\left[ \K{2}{F}{m} \right] = m \A{F}{m} + m(m-1) \B{F}{m}=1
\end{equation}
and, from Corollary~\ref{cor_K2norm}, we have the identity for the (squared) Frobenius norm
\begin{equation}
    \TR\left[ (\K{2}{F}{m})^2\right] = \A{F}{m} \, .
\end{equation}
%implies the orthogonality relation,
%\begin{equation*}
%\HSIP{\SCITI, \K{2}{F}{m} - \SCITI} = \frac{1}{m^2} \TR\left( \K{2}{F}{m} \right) -  \frac{1}{m^4} \TR\left( {\bf I}_m \otimes {\bf I}_m \right) \\  
%                                                      =0.
%\end{equation*}
%In particular, the cross-terms of the squared norm in the left-hand side of~(\ref{cl_1}) vanish, yielding
%\begin{equation}\label{eq_lem1simp}
%\HSNORM{\K{2}{F}{m} -\SCITI}^2 = \HSNORM{\K{2}{F}{m}}^2 -\frac{1}{m^2},
%\end{equation}
%so the first claim follows by substituting Equation~\ref{?????} into Equation~\ref{eq_lem1simp}. 
%
%\jih{We need to record that $\HSNORM{\K{2}{F}{m}}^2 = \A{F}{m}$ somewhere and point the ??? above to it.}

%Upon the expansion of $\Q{1}{F}{m}$ in the the left-hand side of~(\ref{cl_1}), we obtain
%\begin{align*} 
% \HSIP{ \left(\Q{1}{F}{m}(x)\right)^{\otimes 2}, \K{2}{F}{m}}=
%\HSIP{\left(x \otimes x^* \right)^{\otimes 2},  \K{2}{F}{m}} 
% &- \frac{1}{m} \HSIP{\left(x\otimes x^*\right)\otimes {\bf I}_m, \K{2}{F}{m}}\\
% &- \frac{1}{m}  \HSIP{{\bf I}_m \otimes\left(x\otimes x^*\right), \K{2}{F}{m}}\\
%&+\frac{1}{m^2} \HSIP{{\bf I}_m \otimes {\bf I}_m, \K{2}{F}{m}}.\\
%\end{align*}
%By the invariance properties of $\K{2}{F}{m}$, there is no loss in generality in setting $x=e_1$, so it follows from Proposition~\ref{prop_K2comp} that
We conclude
\begin{equation}\label{eq_ipQt2K}
 \HSIP{ \left(\Q{1}{F}{m}(x)\right)^{\otimes 2}, \K{2}{F}{m}} %= \A{F}{m} - \frac{2}{m}(\A{F}{m} + (m-1)\B{F}{m}) + \frac{1}{m^2}= 
 = \A{F}{m} - \frac{1}{m^2}\, .
\end{equation}
%where the second equality follows by applying the normalization identity from Equation~(\ref{eq_trK2is1}).
This shows Claim~(\ref{cl_1}).

%\jih{Am I correctly stating and using the invariance properties of K2?}
%
%
%
%
%To see the second claim, note that
%the trace normalization identity from Equation~(\ref{eq_trK2is1})
%implies the orthogonality relation,
%\begin{equation*}
%\HSIP{\SCITI, \K{2}{F}{m} - \SCITI} = \frac{1}{m^2} \TR\left( \K{2}{F}{m} \right) -  \frac{1}{m^4} \TR\left( {\bf I}_m \otimes {\bf I}_m \right) \\  
%                                                      =0,
%\end{equation*}
%and that, by definition of $\Q{1}{F}{m}$, we have 
%\begin{equation*}
%\HSIP{\left(\Q{1}{F}{m}(x)\right)^{\otimes 2}, \SCITI} = 0.
%\end{equation*}
%Thus, after expanding $\Q{2}{F}{m}$ in the left-hand side of Claim~(\ref{cl_2}) and accounting for the vanishing terms just mentioned, we obtain
%\begin{equation}\label{eq_cl2simp}
%     \HSIP{\Q{2}{F}{m}(x), \K{t}{F}{m} - \SCITI} 
%      = \HSIP{ \left(\Q{1}{F}{m}(x)\right)^{\otimes 2}, \K{2}{F}{m}}  
%       - \HSNORM{\K{2}{F}{m}}^2 + \HSIP{\K{2}{F}{m}, \SCITI}, 
%\end{equation}
%so Claim~(\ref{cl_2}) follows by substituting Equation~(\ref{eq_ipQt2K}), Corollary~(\ref{???}) and Equation~(\ref{eq_trK2is1}) into Equation~(\ref{eq_cl2simp}).

%\jih{A corollar to the K2 prop should stated that the squared norm of K2 equals $\A{F}{m}$.  The ??? above should point to it.}

To see the second claim, we re-express the squared Hilbert Schmidt norm of $\Q{2}{F}{m}$ as an inner product, substitute $\Q{2}{F}{m}$ in the left side of the inner product with its definition, and apply the orthogonality relation, which gives
\begin{equation}\label{eq_IPQ1t2Q2}
     \HSIP{\Q{2}{F}{m}(x), \Q{2}{F}{m} (x)} 
=
\HSIP{\left(\Q{1}{F}{m}(x)\right)^{\otimes 2}  - \VAR{2}, \Q{2}{F}{m}(x)} 
=
\HSIP{\left(\Q{1}{F}{m}(x)\right)^{\otimes 2}, \Q{2}{F}{m}(x)} .
\end{equation}
Next, we replace $\Q{2}{F}{m}$ with its definition in the right side of the inner product to obtain
\begin{equation}\label{eq_ipQ1t2Q2}
\HSIP{\left(\Q{1}{F}{m}(x)\right)^{\otimes 2}, \Q{2}{F}{m}} 
=
\HSNORM{\Q{1}{F}{m}(x)}^4 - \HSIP{\left(\Q{1}{F}{m}(x)\right)^{\otimes 2}, \K{2}{F}{m} - \SCITI }.
\end{equation}
The first term on the right-hand side simplifies to
$$
  \HSNORM{\Q{1}{F}{m}(x)}^4 
=
\left(\TR\left[ \left(x \otimes x^* - {\scriptstyle \frac 1 m} {\bf I}_m\right)^2 \right] \right)^2
  = 
\left(\TR\Big[ \left(x \otimes x^* - {\scriptstyle \frac 1 m} {\bf I}_m\right) x \otimes x^* \Big] \right)^2 
=
\left(\frac {m-1}{m}\right)^2.
$$
Using that  $\TR\left( \Q{1}{F}{m}\right)=0$ by definition, the second  term in the right-hand side of Equation~(\ref{eq_ipQ1t2Q2}) resolves to
\begin{equation}\label{eq_ipQ1t2K2minussciti}
\HSIP{\left(\Q{1}{F}{m}(x)\right)^{\otimes 2}, \K{2}{F}{m} - \SCITI } 
=
 \HSIP{\left(\Q{1}{F}{m}(x)\right)^{\otimes 2}, \K{2}{F}{m}} 
= \A{F}{m} - \frac{1}{m^2},
\end{equation}
where the second equality follows by the first claim of this lemma.  Thus, Claim~(\ref{cl_3}) follows by combining the two terms.

\end{proof}

Finally, we present
the desired equation, which governs the relationship between a frame's cosine set and the corresponding set of signed angles between the higher-dimensional embedded vectors.

\begin{thm}\label{thm_tier2}
	Given unit vectors $\{f_j\}_{j=1}^n \subset \mathbb R^m$, $m \ge 2$, there exist corresponding 
	unit vectors $\left\{T^{(2)}_j \right\}_{j=1}^n \subset \mathbb R^{\D{2}{F}{m}}$ such that for any $j, l \in \{1,2,...,n\}$, the following equation holds:
	$$
\left\langle T^{(2)}_j, T^{(2)}_l \right\rangle  =
\frac{m^2 \D{1}{F}{m}}{(\D{1}{F}{m}-1)(m-1)^2}
\Bigg( \left(|\langle f_j, f_l \rangle|^2 - \frac 1 m\right)^2 - \left( \A{F}{m} - \frac{1}{m^2}\right)  \Bigg) \, .
	$$
\end{thm}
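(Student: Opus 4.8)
The plan is to set $T^{(2)}_j := \Q{2}{F}{m}(f_j)/\HSNORM{\Q{2}{F}{m}(f_j)}$ and to identify each $\Q{2}{F}{m}(f_j)$ with a genuine vector in $\mathbb{R}^{\D{2}{F}{m}}$ via the isometry between the subspace that contains the range of $\Q{2}{F}{m}$ (guaranteed by the corollary to Theorem~\ref{th_orth_cond}) and Euclidean space equipped with the Hilbert--Schmidt inner product. By Lemma~\ref{lem_K2rel}\,(\ref{cl_3}) the quantity $\HSNORM{\Q{2}{F}{m}(x)}$ is independent of the unit vector $x$, so the normalization is uniform and
$$
\left\langle T^{(2)}_j, T^{(2)}_l \right\rangle = \frac{\HSIP{\Q{2}{F}{m}(f_j), \Q{2}{F}{m}(f_l)}}{\HSNORM{\Q{2}{F}{m}}^2} .
$$
Everything then reduces to computing the numerator and to checking that the reciprocal of the constant denominator equals the stated prefactor.

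For the numerator I would expand $\Q{2}{F}{m}(f_j) = \left(\Q{1}{F}{m}(f_j)\right)^{\otimes 2} - \VAR{2}$ on both sides. The orthogonality relation of Theorem~\ref{th_orth_cond} (equivalently Corollary~\ref{cor:QKorth}) gives $\HSIP{\left(\Q{1}{F}{m}(x)\right)^{\otimes 2}, \VAR{2}} = \HSNORM{\VAR{2}}^2$ for every unit vector $x$, so the three $\VAR{2}$-terms collapse to a single $-\HSNORM{\VAR{2}}^2$ and leave
$$
\HSIP{\Q{2}{F}{m}(f_j), \Q{2}{F}{m}(f_l)} = \HSIP{\left(\Q{1}{F}{m}(f_j)\right)^{\otimes 2}, \left(\Q{1}{F}{m}(f_l)\right)^{\otimes 2}} - \HSNORM{\VAR{2}}^2 .
$$
Using multiplicativity of the inner product under tensor squares, $\HSIP{A^{\otimes 2}, B^{\otimes 2}} = \HSIP{A,B}^2$, together with the first-embedding identity $\HSIP{\Q{1}{F}{m}(f_j), \Q{1}{F}{m}(f_l)} = |\langle f_j, f_l\rangle|^2 - \frac 1 m$ from the proof of Theorem~\ref{thm_t1}, rewrites the first term as $\left(|\langle f_j, f_l\rangle|^2 - \frac 1 m\right)^2$. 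For the second term, Proposition~\ref{prop_Q2} gives $\VAR{2} = \K{2}{F}{m} - \SCITI$, and expanding $\HSNORM{\VAR{2}}^2$ with $\TR[\K{2}{F}{m}] = 1$ (Equation~(\ref{eq_trK2is1})) and $\TR[(\K{2}{F}{m})^2] = \A{F}{m}$ (Corollary~\ref{cor_K2norm}) yields $\HSNORM{\VAR{2}}^2 = \A{F}{m} - \frac{1}{m^2}$. This produces exactly the bracketed factor appearing in the theorem.

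It then remains to divide by the constant $\HSNORM{\Q{2}{F}{m}}^2 = 1 - \frac 2 m + \frac{2}{m^2} - \A{F}{m}$ of Lemma~\ref{lem_K2rel}\,(\ref{cl_3}) and to verify the scalar identity
$$
1 - \frac 2 m + \frac{2}{m^2} - \A{F}{m} = \frac{(\D{1}{F}{m}-1)(m-1)^2}{m^2\, \D{1}{F}{m}} .
$$
Substituting $\A{F}{m} = \left(\D{1}{F}{m} + (m-1)^2\right)/(m^2 \D{1}{F}{m})$ and clearing denominators reduces this to $(m^2 - 2m + 1)\D{1}{F}{m} - (m-1)^2 = (m-1)^2(\D{1}{F}{m} - 1)$, which is immediate since $m^2 - 2m + 1 = (m-1)^2$. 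Taking reciprocals yields the prefactor $m^2 \D{1}{F}{m}/\left((\D{1}{F}{m}-1)(m-1)^2\right)$, completing the argument.

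I expect the one genuine bookkeeping obstacle to be this final scalar reconciliation: the embedding dimension $\D{1}{F}{m}$ enters only through the normalizing constant, and one must confirm that the definitions of $\A{F}{m}$ and $\D{1}{F}{m}$ are mutually consistent so that the unwieldy constant $1 - \frac 2 m + \frac{2}{m^2} - \A{F}{m}$ collapses to the clean rational function claimed. The preceding computation of the Hilbert--Schmidt inner product is, by contrast, almost entirely forced once the orthogonality relation and the tensor-square multiplicativity are in hand.
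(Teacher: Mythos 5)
Your proposal is correct and follows essentially the same route as the paper: normalize $\Q{2}{F}{m}(f_j)$ using the constant norm from Lemma~\ref{lem_K2rel}(\ref{cl_3}), use the orthogonality relation to reduce the Hilbert--Schmidt inner product of second embeddings to $\HSIP{\Q{1}{F}{m}(f_j),\Q{1}{F}{m}(f_l)}^2$ minus the constant $\A{F}{m}-\frac{1}{m^2}$, and reconcile the normalizing scalar with the stated prefactor. The only cosmetic difference is that you evaluate the subtracted constant as $\HSNORM{\VAR{2}}^2$ directly from the trace identities, whereas the paper routes it through Lemma~\ref{lem_K2rel}(\ref{cl_1}); the two are numerically identical by the same orthogonality relation.
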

\begin{proof}
Letting $\tilde T^{(2)}_j=\frac{\Q{2}{F}{m}(f_j)}{\HSNORM{\Q{2}{F}{m}(f_j)}}$ 
and 
$\tilde T^{(2)}_l=\frac{\Q{2}{F}{m}(f_l)}{\HSNORM{\Q{2}{F}{m}(f_l)}}$, we may isomorphically identify them with corresponding unit vectors $T^{(2)}_j, T^{(2)}_l \in \SPH{R}{\D{2}{F}{m}}$.
Thus, in terms of these vectors, we have
\begin{equation}\label{eq_iso1}
\NORM{T^{(2)}_j - T^{(2)}_l}^2 =2 - 2 \left\langle T^{(2)}_j, T^{(2)}_l \right\rangle.
\end{equation}
On the other hand, we may pass back to their tensored forms to obtain
\begin{equation}\label{eq_iso2}
\NORM{T^{(2)}_j - T^{(2)}_l}^2
 = \HSNORM{\tilde T_j -\tilde T_l}^2 
= 2 - \frac{2}{\HSNORM{\Q{2}{F}{m}(f_j)}\HSNORM{\Q{2}{F}{m}(f_l)}} \HSIP{\Q{2}{F}{m}(f_j), \Q{2}{F}{m}(f_l)},
\end{equation}
so by Claim~(\ref{cl_3}) of Lemma~\ref{lem_K2rel}  , Equation~(\ref{eq_iso2}) reduces to
\begin{equation}\label{eq_iso2a}
\NORM{T^{(2)}_j - T^{(2)}_l}^2
= 2 - \frac{2}{1 - \frac 2 m + \frac{2}{m^2} - \A{F}{m}} \HSIP{\Q{2}{F}{m}(f_j), \Q{2}{F}{m}(f_l)}.
\end{equation}
By inserting the expression for $\A{F}{m}$, we simplify further to
\begin{equation}\label{eq_iso3}
\NORM{T^{(2)}_j - T^{(2)}_l}^2
= 2 - \frac{2m^2\D{1}{F}{m}}{(m-1)^2(\D{1}{F}{m}-1)} \HSIP{\Q{2}{F}{m}(f_j), \Q{2}{F}{m}(f_l)}.
\end{equation}

Next, we expand $\Q{2}{F}{m}(f_l)$ and obtain
\begin{equation}\label{eq_iso4}
\HSIP{\Q{2}{F}{m}(f_j), \Q{2}{F}{m}(f_l)} 
= \HSIP{\Q{2}{F}{m}(f_j), \left(\Q{1}{F}{m}(f_l)\right)^{\otimes 2}} - 
\HSIP{\left(\Q{1}{F}{m}(f_l)\right)^{\otimes 2}, \K{2}{F}{m} - \SCITI}, 
\end{equation}
By  Corollary~\ref{cor:QKorth}, the second additive term on the right-hand side of Equation~\ref{eq_iso4} vanishes, so expanding $\Q{2}{F}{m}(f_j)$ gives
\begin{equation}\label{eq_iso5}
\HSIP{\Q{2}{F}{m}(f_j), \Q{2}{F}{m}(f_l)} = 
\HSIP{\left(\Q{1}{F}{m}(f_j)\right), \left(\Q{1}{F}{m}(f_l)\right)}^2 
-
\HSIP{\K{2}{F}{m} - \SCITI, \left(\Q{1}{F}{m}(f_l)\right)^{\otimes 2}}. 
\end{equation}
Because $\TR\left(\Q{1}{F}{m}\right)=0$, the second additive term on the right-hand side of Equation~(\ref{eq_iso5}) simplifies to
\begin{equation}\label{eq_iso6}
\HSIP{\K{2}{F}{m} - \SCITI, \left(\Q{1}{F}{m}(f_l)\right)^{\otimes 2}}
=
\HSIP{\K{2}{F}{m}, \left(\Q{1}{F}{m}(f_l)\right)^{\otimes 2}}
=
\A{F}{m} - \frac{1}{m^2},
\end{equation}
where the second equality follows from the first claim of Lemma~\ref{lem_K2rel} .
Expanding $\Q{1}{F}{m}(f_j)$ and $\Q{1}{F}{m}(f_l)$ within the square of the first additive term on the right-hand side of Equation~(\ref{eq_iso5}) yields
\begin{equation}\label{eq_iso7}
\HSIP{\left(\Q{1}{F}{m}(f_j)\right), \left(\Q{1}{F}{m}(f_l)\right)}^2  =\left( |\langle f_j, f_l \rangle|^2 - \frac 1 m \right)^2,
\end{equation}
so substituting Equation~(\ref{eq_iso7}) and Equation~(\ref{eq_iso6}) into Equation~(\ref{eq_iso5}) gives
\begin{equation}\label{eq_iso8}
\HSIP{\Q{2}{F}{m}(f_j), \Q{2}{F}{m}(f_l)} = \left( |\langle f_j, f_l \rangle|^2 - \frac 1 m \right)^2 - \left(\A{F}{m} - \frac{1}{m^2} \right).
\end{equation}
The claim follows by equating Equation~(\ref{eq_iso1}) with Equation~(\ref{eq_iso3}), replacing the inner product with the value given in Equation~(\ref{eq_iso8}), and then rearranging to the desired form.
\end{proof}

%and apply Claim~(\ref{cl_3}) of Lemma~\ref{lem_K2rel} to obtain the normalized vectors
%$$
%T_j={\scriptstyle \frac{1}{\sqrt{1 - \frac 2 m + \frac{2}{m^2} -\A{F}{m}}}}\Q{2}{F}{m}(f_j)
% \text{ and } 
%T_l={\scriptstyle \frac{1}{\sqrt{1 - \frac 2 m + \frac{2}{m^2} -\A{F}{m}}}}\Q{2}{F}{m}(f_l).
%$$

%Unlike the case $t=1$, it is evident from Theorem~\ref{thm_tier2} that the embedding equation for the case $t=2$ depends on the underlying field.  Noting that
%$$
%\A{R}{m} = \frac{3}{m(m+2)}, \, \D{1}{R}{m} = 
%		\frac{(m+2)(m-1)}{2} , \,
% \A{C}{m} = \frac{2}{m(m+1)}, \, \text{ and }
%	\D{1}{C}{m} 	=
%		m^2-1 ,
%		$$
%we re-express the two distinct cases of Theorem~\ref{thm_tier2} as  corollaries.

We re-express this governing equation from Theorem~\ref{thm_tier2} in terms of the ambient dimension, $m$.
\begin{cor}\label{cor_realtier2}
Given unit vectors $\{f_j\}_{j=1}^n \subset \mathbb R^m$, there exist corresponding 
	unit vectors $\left\{T_j^{(2)}\right\}_{j=1}^n \subset \mathbb R^{\D{2}{R}{m}}$ such that for any $j, l \in \{1,2,...,n\}$, the following equation holds:
	$$
\left\langle T^{(2)}_j, T^{(2)}_l \right\rangle  =
\frac{m^2(m+2)}{m^3 -5m +4}
\Bigg( \left(|\langle f_j, f_l \rangle|^2 - \frac 1 m\right)^2 -\frac{2(m-1)}{m^2(m+2)}  \Bigg) \, .
	$$
\end{cor}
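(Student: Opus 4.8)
The plan is to specialize the governing equation of Theorem~\ref{thm_tier2} to the real case by inserting the explicit closed forms of the two field-dependent constants $\D{1}{R}{m}$ and $\A{R}{m}$ and then simplifying. Since Theorem~\ref{thm_tier2} already supplies the inner-product identity in terms of these constants, no new geometric input is required; the corollary is purely a matter of algebraic reduction, and accordingly I would present it as a direct substitution followed by bookkeeping.

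First I would simplify the scalar $\A{R}{m}$. Using $\D{1}{R}{m} = \frac{(m+2)(m-1)}{2}$, the numerator of $\A{R}{m} = \frac{\D{1}{R}{m} + (m-1)^2}{m^2\,\D{1}{R}{m}}$ factors as $(m-1)\bigl(\frac{m+2}{2} + (m-1)\bigr) = \frac{3m(m-1)}{2}$, so the common factors of $(m-1)$ and $\frac12$ cancel against the denominator and $\A{R}{m} = \frac{3}{m(m+2)}$. From this the inner correction term becomes
$$
\A{R}{m} - \frac{1}{m^2} = \frac{3}{m(m+2)} - \frac{1}{m^2} = \frac{3m - (m+2)}{m^2(m+2)} = \frac{2(m-1)}{m^2(m+2)},
$$
which is exactly the constant appearing inside the parentheses of the claimed formula.

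Next I would reduce the scalar prefactor $\frac{m^2\,\D{1}{R}{m}}{(\D{1}{R}{m}-1)(m-1)^2}$. Substituting $\D{1}{R}{m} = \frac{(m+2)(m-1)}{2}$ gives $\D{1}{R}{m} - 1 = \frac{m^2+m-4}{2}$, and after cancelling the common factor of $\frac12$ together with one power of $(m-1)$ the prefactor collapses to $\frac{m^2(m+2)}{(m^2+m-4)(m-1)}$. The only step requiring a moment's attention is recognizing the factorization
$$
(m^2+m-4)(m-1) = m^3 - 5m + 4,
$$
which places the denominator into the stated form and yields the prefactor $\frac{m^2(m+2)}{m^3-5m+4}$.

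Assembling the two simplified pieces back into the formula of Theorem~\ref{thm_tier2} then produces precisely the asserted identity. The main (and essentially only) obstacle is bookkeeping: correctly tracking the cancellations of $(m-1)$ and $\frac12$ in both constants and verifying the cubic factorization above; there is no conceptual difficulty beyond careful algebra.
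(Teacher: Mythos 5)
Your proposal is correct and matches the paper's (omitted) argument exactly: the corollary is stated without proof as a direct specialization of Theorem~\ref{thm_tier2}, and your substitutions $\A{R}{m}=\frac{3}{m(m+2)}$, $\A{R}{m}-\frac{1}{m^2}=\frac{2(m-1)}{m^2(m+2)}$, and $(m^2+m-4)(m-1)=m^3-5m+4$ all check out. The only detail worth noting is that the hypothesis $m\ge 2$ from Theorem~\ref{thm_tier2} should be carried along, since $m=1$ annihilates the denominator $m^3-5m+4$.
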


%\input{cor_complextier2}

%Because $\Q{2}{F}{m}$ is an isometry, if it maps a unit norm frame, $\mathcal F=\{f_j\}_{j=1}^n$ to an optimal configuration of caps, (ie it saturates one of the two bounds in Theorem~\ref{thm_rankin}), then it follows that $\mathcal F$ is a Grassmannian frame.

We conclude by combining this embedding with Rankin's bound in order to characterize Grassmannian frames.

\begin{thm}\label{thm:main}
Given a frame $\mathcal F$ of $n$ vectors in ${\mathbb R}^m$, with $m \ge 2$, $\frac{n-\D{1}{F}{m}}{n-1} \ge \frac{\D{1}{F}{m}}{(m-1)^2}$, and $\left\{\Q{2}{F}{m}(f): f \in {\mathcal F}\right\}$
forms a simplex in a subspace of $\SA{F}{m}^{\otimes 2}$, then $\mathcal F$ is Grassmannian. Moreover, if $n> \D{2}{F}{m}+1$ and $(m-1)^2 \ge \D{1}{F}{m}$,
and the inner products between pairs of $\left\{\Q{2}{F}{m}(f): f \in {\mathcal F} \right\}$ are non-positive, then the frame is Grassmannian.
\end{thm}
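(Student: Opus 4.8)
The plan is to mirror the argument for the first embedding (Theorem~\ref{thm_t1}), replacing the Conway--Hardin--Sloane relation by the quartic governing equation of Theorem~\ref{thm_tier2} and then feeding the resulting configuration of unit vectors into Rankin's bound (Theorem~\ref{thm_rankin}). Theorem~\ref{thm_tier2} produces unit vectors $\{T^{(2)}_j\}_{j=1}^n \subset \mathbb R^{\D{2}{F}{m}}$ whose inner products obey
$$
\left\langle T^{(2)}_j, T^{(2)}_l \right\rangle = C\left( \left( |\langle f_j, f_l\rangle|^2 - \tfrac 1m \right)^2 - \kappa \right),
$$
where $C := \frac{m^2 \D{1}{F}{m}}{(\D{1}{F}{m}-1)(m-1)^2} > 0$ and $\kappa := \A{F}{m} - \tfrac{1}{m^2} = \frac{(m-1)^2}{m^2 \D{1}{F}{m}}$. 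Since $C>0$, maximizing $\langle T^{(2)}_j, T^{(2)}_l\rangle$ over $j \neq l$ is the same as maximizing $(|\langle f_j,f_l\rangle|^2-\tfrac1m)^2$, and Rankin's bound lower-bounds that maximum.

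The decisive difference from the first embedding, and the step I expect to be the main obstacle, is that $c \mapsto (c^2 - \tfrac1m)^2$ is not monotone on $[0,1]$: the inequality $(c^2-\tfrac1m)^2 \ge \rho$ is solved both by an \emph{inner branch} $c^2 \le \tfrac1m - \sqrt\rho$ and by an \emph{outer branch} $c^2 \ge \tfrac1m + \sqrt\rho$, and only the outer branch controls the coherence. Rankin bounds $(|\langle f_j,f_l\rangle|^2-\tfrac1m)^2$ from below, but a priori this could be achieved by a near-orthogonal (inner-branch) pair, which says nothing about $\mu(\mathcal F)$. The key is to show that the side hypotheses precisely exclude the inner branch. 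Writing $\rho := \kappa - \tfrac{1}{C(n-1)}$, the translated Rankin inequality reads $\max_{j\neq l}(|\langle f_j,f_l\rangle|^2-\tfrac1m)^2 \ge \rho$, and substituting $C$ and $\kappa$ one checks that $\rho = \frac{(m-1)^2}{m^2 \D{1}{F}{m}}\cdot\frac{n-\D{1}{F}{m}}{n-1}$, whence the algebraic equivalence
$$
\frac{n-\D{1}{F}{m}}{n-1} \ge \frac{\D{1}{F}{m}}{(m-1)^2}
\iff \rho \ge \tfrac{1}{m^2}
\iff \sqrt\rho \ge \tfrac1m .
$$
When $\sqrt\rho \ge \tfrac1m$ the inner branch requires $c^2 \le \tfrac1m-\sqrt\rho \le 0$ and so is unrealizable by a genuine cosine; hence any pair attaining $(|\langle f_j,f_l\rangle|^2-\tfrac1m)^2 \ge \rho$ must satisfy $|\langle f_j,f_l\rangle|^2 \ge \tfrac1m+\sqrt\rho$.

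With the branch issue resolved the first claim is immediate. Applying Theorem~\ref{thm_rankin} to $\{T^{(2)}_j\}$ for an arbitrary unit-norm frame $\mathcal G$ of $n$ vectors gives $\max_{j\neq l}\langle T^{(2)}_j,T^{(2)}_l\rangle \ge -\tfrac{1}{n-1}$, hence $\max_{j\neq l}(|\langle g_j,g_l\rangle|^2-\tfrac1m)^2\ge\rho$, and by the outer-branch conclusion $\mu(\mathcal G)^2 \ge \tfrac1m+\sqrt\rho$. If $\{\Q{2}{F}{m}(f):f\in\mathcal F\}$ forms a simplex then every $\langle T^{(2)}_j,T^{(2)}_l\rangle$ equals $-\tfrac1{n-1}$, so each $(|\langle f_j,f_l\rangle|^2-\tfrac1m)^2$ equals $\rho$; exclusion of the inner branch forces $|\langle f_j,f_l\rangle|^2=\tfrac1m+\sqrt\rho$ for all $j\neq l$, so $\mathcal F$ is equiangular and saturates the universal bound, hence is Grassmannian.

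For the moreover part I would run the same argument with Rankin's improved bound. When $n>\D{2}{F}{m}+1$, the $n$ unit vectors $\{T^{(2)}_j\}$ in $\mathbb R^{\D{2}{F}{m}}$ satisfy $\max_{j\neq l}\langle T^{(2)}_j,T^{(2)}_l\rangle \ge 0$, i.e. $\max_{j\neq l}(|\langle f_j,f_l\rangle|^2-\tfrac1m)^2\ge\kappa$; and since $(m-1)^2\ge\D{1}{F}{m}$ is exactly $\kappa\ge\tfrac1{m^2}$, we again have $\sqrt\kappa\ge\tfrac1m$, killing the inner branch and giving $\mu(\mathcal G)^2\ge\tfrac1m+\sqrt\kappa$ for every such frame $\mathcal G$. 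Finally, the hypothesis that the pairwise inner products of $\{\Q{2}{F}{m}(f)\}$ are non-positive yields $(|\langle f_j,f_l\rangle|^2-\tfrac1m)^2\le\kappa$ for all $j\neq l$, hence $\mu(\mathcal F)^2\le\tfrac1m+\sqrt\kappa$; combined with the universal lower bound this forces $\mu(\mathcal F)^2=\tfrac1m+\sqrt\kappa$, so $\mathcal F$ attains the minimal coherence and is Grassmannian. Throughout, the only nonroutine work is the branch analysis and the two equivalences identifying the side conditions with $\sqrt\rho\ge\tfrac1m$ and $\sqrt\kappa\ge\tfrac1m$; everything else is a direct invocation of Theorems~\ref{thm_tier2} and~\ref{thm_rankin}.
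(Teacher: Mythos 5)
Your proposal is correct in substance and follows essentially the same route as the paper: both pass through Theorem~\ref{thm_tier2}, apply Rankin's two bounds from Theorem~\ref{thm_rankin}, and use the side hypotheses to control the non-monotonicity of the quartic $c \mapsto (c^2-\tfrac1m)^2$ --- the paper does this by showing $p(0)\le -\tfrac{1}{n-1}$ (resp.\ $p(0)\le 0$) together with the orthoplex bound and the monotonicity of $p$ on $[\tfrac1m,\infty)$, which is exactly your ``exclude the inner branch'' step in different clothing, and your identifications $\rho\ge\tfrac1{m^2}\iff\frac{n-\D{1}{F}{m}}{n-1}\ge\frac{\D{1}{F}{m}}{(m-1)^2}$ and $\kappa\ge\tfrac1{m^2}\iff(m-1)^2\ge\D{1}{F}{m}$ are the same computations. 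One caveat: when the hypothesis holds with equality (so $\sqrt\rho=\tfrac1m$), the inner branch \emph{is} realizable at $c=0$, so your assertions that it is ``unrealizable by a genuine cosine'' and that the simplex case forces $\mathcal F$ to be equiangular are literally false there --- the paper's $E_8$ example ($n=120$, $m=8$, cosines $\{0,\tfrac12\}$) sits exactly at this equality and is not equiangular; the Grassmannian conclusion still survives because orthogonal pairs also attain $(c^2-\tfrac1m)^2=\rho$, but this boundary case (present, and glossed over, in the paper's own proof as well) deserves an explicit word.
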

\begin{proof}
%We first consider the complex case.
%Unfortunately, the assumption $  \frac{n-\D{1}{C}{m}}{n-1} \ge \frac{\D{1}{C}{m}}{(m-1)^2}$ implies for $m\ge 2$
%that $\frac{n-\D{1}{C}{m}}{n-1} \ge 1$ which rules out $\D{1}{C}{m}>1$, so this case is vacuous.
%Next, we treat the real case.
If $\frac{n-\D{1}{R}{m}}{n-1} \ge \frac{\D{1}{F}{m}}{(m-1)^2} $, then by estimating $\D{1}{F}{m} > (m-1)^2/2$,
we have
$n > 2\D{1}{R}{m} - 1$ and with $\D{1}{R}{m} \ge 2$, we get
$n >\D{1}{R}{m} + 1$, so the orthoplex bound holds, $\max_{j \ne l} |\langle f_j, f_l \rangle |^2 \ge \frac 1 m$.
Because the polynomial $p(x) = \frac{m^2\D{1}{F}{m}}{(\D{1}{F}{m}-1)(m-1)^2}\left((x- \frac 1 m)^2 - (\A{F}{m}- \frac{1}{m^2})\right)$
is decreasing on $[0, \frac 1 m]$ and increasing on $\left[\frac 1 m, \infty\right)$,
%we know that minimizing 
%$p\left( \max_{j \ne l} |\langle f_j, f_l \rangle |^2\right)$ over $\UNFs{R}$
%is equivalent to minimizing $\max_{j \ne l} |\langle f_j, f_l \rangle |$ over $\UNFs{R}$.
%With the monotonicity of $p$ on $[0,\frac 1 m]$, we have
it follows that
$$
   \max_{j \ne l} p\left( |\langle f_j, f_l \rangle |^2\right) \le \max \left\{p(0), p(\max_{j \ne l} |\langle f_j, f_l \rangle |^2) \right\} \, .
$$

If $\frac{n-\D{1}{F}{m}}{n-1} \ge \frac{\D{1}{F}{m}}{(m-1)^2} $, then 
$ p(0) \le - \frac{1}{n-1}$, and Rankin's bound implies that
$$p\left(\max_{j \ne l} |\langle f_j, f_l \rangle |^2\right) \ge - \frac{1}{n-1}.$$
Hence, if $\left\{\Q{2}{F}{m}(f): f \in {\mathcal F}\right\}$ forms a simplex then equality is achieved and the frame is Grassmannian.

We continue with the more restrictive assumption
$n>\D{2}{F}{m}+1$,
where we know Rankin's strengthened bound holds. In this case, assuming
$(m-1)^2 \ge \D{1}{F}{m}$ implies
$p(0) \le 0$, which, by Rankin's bound, implies $p\left(\max_{j \ne l} |\langle f_j, f_l \rangle |^2\right) \ge 0$ and if all the Hilbert-Schmidt inner
products between pairs of $\{\Q{2}{F}{m}(f): f \in {\mathcal F}\}$ are non-positive, then equality holds and the frame is Grassmannian.
%
%By Rankin's bound, the minimum of $p\left(\max_{j \ne l} |\langle f_j, f_l \rangle |^2\right)$ cannot be smaller than $-\frac{1}{n-1}$. Consequently,
%if $p( \max_{j \ne l} |\langle f_j, f_l \rangle |^2) = - \frac{1}{n-1}$, then the frame $\mathcal F$ is Grassmannian.
\end{proof}

\section{Examples of high redundancy frames with low coherence arising from the second embedding}
To conclude this work, we present examples of frames with low coherence that arise from second tensor embedding and discuss their interesting structural properties.

\begin{ex}
A set of orthonormal bases, $\left\{\mathcal B_j\right\}_{j=1}^k$ for $\mathbb R^m$ are said to be {\bf mutually unbiased} if $|\langle x, y \rangle|^2 = \frac 1 m$ for every $x \in \mathcal B_j, y \in \mathcal B_l$ with $j \neq l$.
The existence of three such bases in $\mathbb R^4$ is well-known~\cite{ConwayHardinSloane1996, Appleby2009, MR3557826} and it is also known that their union is a Grassmannian frame~\cite{ConwayHardinSloane1996, Appleby2009, MR3557826}, in accordance with the conditions of Theorem~\ref{thm_t1}.  Out of curiousity, we fed this system of vectors through equation from Corollary~\ref{cor_realtier2} and discovered that the embedded bases, $\left\{ \mathcal B_j^{(2)} \right\}_{j=1}^3$, have the following peculiar property.  

     Given any choice of $j \in \{1,2,3\}$ and any vector $T^{(2)}_j \in \mathcal B_j^{(2)}$, we observe that orthogonal vectors remain orthogonal when embedded and
$$
\Big\{ 
\left\langle  T^{(2)}_j , T^{(2)}_l \right\rangle  : T^{(2)}_l \in \mathcal B^{(2)}_l, l \in \{1,2,3\}, l\neq j
\Big\} 
=
\Bigg\{ - \frac 1 8 \Bigg\}, 
$$
so that each embedded vector resembles the vertex of a 9-simplex relative to the eight vectors coming from the other two embedded bases.
\end{ex}

\begin{ex}[16 vectors in $\mathbb R^3$]
Assisted by Sloane's database of putatively optimal packings~\cite{sloanetbl}, we confirmed numerically that the second embedding maps Sloane's example of 16 vectors in $\mathbb R^3$ into a packing of 16 $\frac{\pi}{2}$-caps.  After a helpful discussion with Dustin Mixon, we then ascertained that this example corresponds to the $16$ lines passing through the antipodal vertices of a {\bf biscribed pentakis dodecahedron}.  Analytic coordinate representations of the $32$ vertices of this polytope can be found at {\it Visual Polyhedra}~\cite{vispoly}, an online database of various exotic polyhedra in $\mathbb R^3$.  After discarding antipodal points, the remaining $16$ vertices correpond to a $6$-angular unit norm tight frame, $\mathcal F$, for $\mathbb R^3$ with cosine set
$$
\Theta_{\mathcal F}= 
\left\{
\scriptstyle 
\sqrt{ \frac{1}{15} (5 - 2 \sqrt 5)},
 \frac{1}{\sqrt 5},
\frac 1 3,
\sqrt{\frac{7}{15}},
 \frac{\sqrt 5}{3},
\sqrt{ \frac{1}{15} (5 + 2 \sqrt 5)}
\right\}. % d tensor embedding.
$$
Passing these values through the equation from Corollary~\ref{cor_realtier2} shows that the 16 frame vectors of $\mathcal F$ map to 16 vectors, $\mathcal T = \{T_j\}_{j=1}^{16}$, on the sphere in $\mathbb R^{14}$ for whose (signed) cosine set is
   $$
\left\{ \langle T^{(2)}_j, T^{(2)}_l \rangle : j\neq l \right\} = \{-1/5, -1/9, 0\},
$$
meaning the embedded vectors form the centers of an optimal cap-packing according to the second of Rankin's conditions in Theorem~\ref{thm_rankin}. 
Unfortunately, the parameters of this example do not satisfy the sufficiency conditions from Theorem~\ref{thm:main}, so we may not state with certainty this is indeed a Grassmannian frame; however, curiously, we have also observed that the first embedding maps this frame into a set of vectors, $\left\{T^{(1)}_j\right\}_{j=1}^{16} \subset \mathbb R^5$, which forms a tight Grassmannian frame, characterized by the original orthoplex bound.  The optimal incoherence of this intermediate frame has recently been observed by Fickus, Jasper, and Mixon~\cite{2017arXiv170701858F}.  Given the evidence, we find it reasonable to posit that  $\mathcal F$ is likely a Grassmannian frame.  If so, then it would follow that 
$$\mu_{16, 3}(\mathbb R)= \sqrt{ \frac{1}{15} (5 + 2 \sqrt 5)}.$$
\end{ex}

\begin{ex}[120 vectors in $\mathbb R^8$]
The optimal coherence of this example has been verified via the Levenschtein bound~\cite{lev_paper}, but we recertify it here in terms of the second tensor embedding. After discarding antipodal vectors from the 240 shortest vectors of the E8 lattice, the remaining $120$ vectors form a unit norm frame, $\mathcal F$, for $\mathbb R^8$ with cosine set,
$$
\Theta_{\mathcal F} = \{0, 1/2 \}.
$$
We verify the assumptions of Theorem~\ref{thm:main}: $\D{1}{R}{8}=35$, so $\frac{120-\D{1}{R}{8}}{119} = \frac{85}{119} \ge \frac{35}{49} = \frac{\D{1}{R}{8}}{(8-1)^2}$.
One can verify that the second tensor embedding maps this frame to a regular $119$-simplex, meaning the embedded vectors correspond to an optimal cap packing according to the first of Rankin's conditions from Theorem~\ref{thm_rankin}, thereby verifying that $\mathcal F$ is a Grassmannian frame and $$\mu_{120,8} (\mathbb R)=\frac{1}{2}.$$ 
\end{ex}

\bibliography{tier2_bib}
\bibliographystyle{plain}

\end{document}